\newtheorem{thm}{Theorem}[section]
\newtheorem{prop}[thm]{Proposition}
\theoremstyle{definition}
\newtheorem{definition}[thm]{Definition}
\theoremstyle{remark}
\newtheorem{remark}[thm]{Remark}
\numberwithin{equation}{section}
\begin{document}

\large

\title[Realizable Classes in the Unitary Class Group]{On the Realizable Classes of the Square Root of the Inverse Different in the Unitary Class Group}

\author{Cindy Tsang}
\address{Department of Mathematics, University of California, Santa Barbara}
\email{cindytsy@math.ucsb.edu}
\urladdr{http://math.ucsb.edu/$\sim$cindytsy} 

\date{November 29, 2015}

\begin{abstract}Let $K$ be a number field with ring of integers $\mathcal{O}_K$ and let $G$ be a finite abelian group of odd order. Given a $G$-Galois $K$-algebra $K_h$, let $A_h$ denote its square root of the inverse different, which exists by Hilbert's formula. If $K_h/K$ is weakly ramified, then the pair $(A_h,Tr_h)$ is locally $G$-isometric to $(\mathcal{O}_KG,t_K)$ and hence defines a class in the unitary class group $\mbox{UCl}(\mathcal{O}_KG)$ of $\mathcal{O}_KG$. Here $Tr_h$ denotes the trace of $K_h/K$ and $t_K$ the symmetric bilinear form on $\mathcal{O}_KG$ for which $t_K(s,t)=\delta_{st}$ for all $s,t\in G$. We study the collection of all such classes and show that a subset of them is in fact a subgroup of $\mbox{UCl}(\mathcal{O}_KG)$.
\end{abstract}

\maketitle

\tableofcontents

\section{Introduction}\label{s:1}

Let $K$ be a number field with ring of integers $\mathcal{O}_K$ and let $G$ be a finite group of odd order. The set of all isomorphism classes of $G$-Galois $K$-algebras (see Section~\ref{s:4} for a brief review of Galois algebras) is in one-one correspondence with the pointed set $H^1(\Omega_K,G)$, where $\Omega_K$ is the absolute Galois group of $K$ acting trivially on $G$. Given $h\in H^1(\Omega_K,G)$, we will write $K_h$ for a Galois algebra representative and $Tr_h$ for the trace of $K_h/K$. Moreover, we will write $A_h$ for the square root of the inverse different of $K_h/K$. Since $G$ has odd order,  the inverse different of $K_h/K$ indeed has a square root by Hilbert's formula (see \cite[Chapter IV, Proposition 4]{Serre}, for example).

If $K_h/K$ is weakly ramified (see Definition~\ref{ramification}), then it follows from \cite[Theorem 1 in Section 2]{Erez} that $A_h$ is locally free as an $\mathcal{O}_KG$-module and hence defines a class $\mbox{cl}(A_h)$ in the locally free class group $\mbox{Cl}(\mathcal{O}_KG)$ of $\mathcal{O}_KG$. Such a class in $\mbox{Cl}(\mathcal{O}_KG)$ is said to be \emph{$A$-realizable}, and \emph{tame $A$-realizable} if $K_h/K$ is tame. For $G$ abelian, in \cite{T} the author has studied these $A$-realizable classes using techniques developed by L. McCulloh in \cite{McCulloh}.  The purpose of this paper is to show that the same methods used in \cite{T} can be applied to study the structure of $A_h$ as an $\mathcal{O}_KG$-module equipped with the bilinear form induced by $Tr_h$. We will explain this in more detail below.

Given $h\in H^1(\Omega_K,G)$, it is well-known that $A_h$ is self-dual respect to $Tr_h$ (this follows from \cite[Chapter 3, (2.14)]{FT}, for example). \mbox{In other words, we have}
\[
A_h=\{a\in K_h\mid Tr_h(aA_h)\subset\mathcal{O}_K\}.
\]
In particular, the map $Tr_h$ induces a $G$-invariant symmetric $\mathcal{O}_K$-bilinear form
\[
A_h\times A_h\longrightarrow\mathcal{O}_K;\hspace{1cm}(a,b)\mapsto Tr_h(ab)
\]
on $A_h$. The pair $(A_h,Tr_h)$ is thus a \mbox{$G$-form over $\mathcal{O}_K$} (see Subsection~\ref{s:3.1} for a brief review of $G$-forms). On the other hand, there is a canonical symmetric $\mathcal{O}_K$-bilinear form $t_K$ on $\mathcal{O}_KG$ for which $t_K(s,t)=\delta_{st}$ for all $s,t\in G$. 

If $K_h/K$ is weakly ramified, then again $A_h$ is locally free over $\mathcal{O}_KG$ by \cite[Theorem 1 in Section 2]{Erez}. In other words, for each prime $v$ in $\mathcal{O}_K$, there exists an $\mathcal{O}_{K_v}G$-isomorphism $\mathcal{O}_{K_v}\otimes_{\mathcal{O}_K}A_h\simeq\mathcal{O}_{K_v}G$, where $\mathcal{O}_{K_v}$ denotes the ring of integers in the completion of $K$ with respect to $v$. It is natural to ask whether such an isomorphism may be chosen such that the bilinear forms $Tr_h$ and $t_K$ are preserved. More precisely, are the $G$-forms $(A_h,Tr_h)$ and $(\mathcal{O}_KG,t_K)$ locally $G$-isometric over $\mathcal{O}_K$ (see Definition~\ref{Giso})? For $G$ abelian and of odd order, the answer turns out to be affirmative (see Subsection~\ref{s:5.0}). In this case, the pair $(A_h,Tr_h)$ defines a class in the \emph{unitary class group UCl$(\mathcal{O}_KG)$ of $\mathcal{O}_KG$} (see Subsection~\ref{s:3.2}). By abuse of terminology, such a class in $\mbox{UCl}(\mathcal{O}_KG)$ is also said to be \emph{$A$-realizable}, and \emph{tame $A$-reazliable} if $K_h/K$ is tame.

In what follows, we assume that $G$ is abelian and of odd order. The pointed set $H^1(\Omega_K,G)$ is then equal to $\mbox{Hom}(\Omega_K,G)$ and hence has a group structure. Define
\[
H^1_t(\Omega_K,G):=\{h\in H^1(\Omega_K,G)\mid K_h/K\mbox{ is tame}\},
\]
which is a subgroup of $H^1(\Omega_K,G)$ (see Remark~\ref{tamesubgp}), and
\[
H^1_w(\Omega_K,G):=\{h\in H^1(\Omega_K,G)\mid K_h/K\mbox{ is weakly ramified}\}.
\]
Moreover, we will write
\[
\mathcal{A}_u(\mathcal{O}_KG):=\{\mbox{ucl}(A_h):h\in H_w^1(\Omega_K,G)\}
\]
for the set of all $A$-realizable classes in $\mbox{UCl}(\mathcal{O}_KG)$, and
\[
\mathcal{A}_u^t(\mathcal{O}_KG):=\{\mbox{ucl}(A_h):h\in H_t^1(\Omega_K,G)\}
\]
for the subset of $\mathcal{A}_u(\mathcal{O}_KG)$ consisting of the tame $A$-realizable classes. It has been shown in \cite[Theorem 4.1]{ErezMorales} that $\mathcal{A}_u(\mathbb{Z}G)=1$, and in \cite[Theorem 3.6]{Morales} that $\mathcal{A}_u^t(\mathcal{O}_KG)$ is subgroup of $\mbox{UCl}(\mathcal{O}_KG)$ when $G$ has odd prime order and $K$ contains all $|G|$-th roots of unity.

Now, consider the map
\[
\mbox{gal}_{A,u}:H^1_w(\Omega_K,G)\longrightarrow\mbox{UCl}(\mathcal{O}_KG);\hspace{1em}\mbox{gal}_{A,u}(h):=\mbox{ucl}(A_h).
\]
Note that $\mbox{gal}_{A,u}(H^1_w(\Omega_K,G))=\mathcal{A}_u(\mathcal{O}_KG)$ and $\mbox{gal}_{A,u}(H^1_t(\Omega_K,G))=\mathcal{A}^t_u(\mathcal{O}_KG)$ by definition. But $H^1_w(\Omega_K,G)$ is only a subset of $H^1(\Omega_K,G)$, and the map $\mbox{gal}_{A,u}$ is not a homomorphism in general even when restricted to the subgroup $H^1_t(\Omega_K,G)$. Hence, it is unclear whether $\mathcal{A}_u(\mathcal{O}_KG)$ and $\mathcal{A}^t_u(\mathcal{O}_KG)$ are subgroups of $\mbox{UCl}(\mathcal{O}_KG)$. Nevertheless, we will show that $\mbox{gal}_{A,u}$ preserves inverses and is \emph{weakly multiplicative}. More precisely, let $M_K$ be the set of primes in $\mathcal{O}_K$ and define 
\[
d(h):=\{v\in M_K\mid K_h/K\mbox{ is ramified at $v$}\}
\]
for $h\in H^1(\Omega_K,G)$. Analogous to \cite[Theorem 1.2]{T}, we will prove:

\begin{thm}\label{thm:weaku}Let $K$ be a number field and let $G$ be a finite abelian \mbox{group of} odd order. For all $h,h_1,h_2\in H^1_w(\Omega_K,G)$ with $d(h_1)\cap d(h_2)=\emptyset$, we have 
\begin{enumerate}[(a)]
\item  $h^{-1}\in H_w^1(\Omega_K,G)$ and $\mbox{gal}_{A,u}(h^{-1})=\mbox{gal}_{A,u}(h)^{-1}$;
\item $h_1h_2\in H^1_w(\Omega_K,G)$ and $
\mbox{gal}_{A,u}(h_1h_2)=\mbox{gal}_{A,u}(h_1)\mbox{gal}_{A,u}(h_2)$.
\end{enumerate}
\end{thm}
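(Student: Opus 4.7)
The plan is to follow the blueprint of \cite[Theorem 1.2]{T} for the locally free class group and upgrade it to the unitary class group. By the review promised in Subsection~\ref{s:3.2}, $\mathrm{UCl}(\mathcal{O}_KG)$ should admit an idelic description in which each class $\mathrm{ucl}(A_h)$ is represented by an idele built from a resolvend of a normal basis generator of $K_h$, together with the extra data that encodes the $G$-invariant form $Tr_h$. Once this representation is set up, both parts of the theorem become local questions: the weak ramification condition, the square root of the inverse different, and the trace form can all be tested prime-by-prime.

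For part~(a), I would first observe that $K_{h^{-1}}$ coincides with $K_h$ as a $K$-algebra, differing only by the inversion automorphism of $G$. Hence the decomposition and inertia groups, as well as the higher ramification filtration, are unchanged, so $d(h^{-1})=d(h)$ and $h^{-1}\in H^1_w(\Omega_K,G)$. Moreover, $A_{h^{-1}}=A_h$ as a module, and inversion on $G$ carries $Tr_h$ to itself since the trace is $G$-invariant. On the idelic side, the map induced by $g\mapsto g^{-1}$ on resolvends corresponds precisely to inversion in $\mathrm{UCl}(\mathcal{O}_KG)$; here the self-duality of $A_h$ with respect to $Tr_h$ is what guarantees that inversion of the module-theoretic idele agrees with inversion of its unitary lift.

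For part~(b), the disjointness $d(h_1)\cap d(h_2)=\emptyset$ ensures that at every prime $v$ of $\mathcal{O}_K$, at least one of $h_1,h_2$ is unramified at $v$. Consequently the ramification of $h_1h_2$ at $v$ is inherited from whichever of $h_1,h_2$ ramifies there, so $h_1h_2\in H^1_w(\Omega_K,G)$. The same local analysis shows that $A_{h_1h_2,v}$ decomposes as $A_{h_1,v}\cdot A_{h_2,v}$ inside $K_{h_1h_2,v}$ with the trace form respecting this decomposition (because tensoring an unramified piece with the other piece multiplies the forms in the expected way). Choosing normal basis generators prime-by-prime, the resolvend of $h_1h_2$ factors as the product of the resolvends of $h_1$ and $h_2$; this is exactly the multiplicativity already exploited in \cite{T} to prove the analogue for $\mathrm{Cl}(\mathcal{O}_KG)$, and we propagate it to $\mathrm{UCl}(\mathcal{O}_KG)$ by the idelic description.

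The main obstacle is the last step of each part, namely verifying that the idelic data representing $\mathrm{ucl}(A_h)$ — not just $\mathrm{cl}(A_h)$ — behaves multiplicatively under resolvend products and is inverted by the involution $g\mapsto g^{-1}$. In \cite{T} only the underlying module had to be tracked, whereas here one must simultaneously control the $G$-isometry class of $(A_h,Tr_h)$ against $(\mathcal{O}_KG,t_K)$. The key technical point will be to show that the local $\mathcal{O}_{K_v}G$-isomorphisms guaranteed by \cite[Theorem 1 in Section 2]{Erez} can be chosen compatibly with the trace forms under both inversion and disjointly ramified products, i.e.\ to pin down a resolvend representative whose $G$-equivariant norm reproduces $t_K$. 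Once this compatibility is established, the theorem follows from the corresponding computations in \cite{T} by applying them to the image in $\mathrm{UCl}(\mathcal{O}_KG)$.
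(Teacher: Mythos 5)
Your overall strategy is the paper's: represent $\mathrm{ucl}(A_h)$ by an id\`ele $c$ comparing local generators $a_v$ of $A_{h_v}$ to a global normal basis generator $b$ of $K_h$, and track how $c$ transforms under $h\mapsto h^{-1}$ and $(h_1,h_2)\mapsto h_1h_2$ via resolvends. But the step you defer as ``the main obstacle'' is exactly the step that must be supplied, and your proposed route to it is misdirected. The point is \emph{not} that the local $\mathcal{O}_{K_v}G$-isomorphisms from Erez's theorem can be chosen compatibly with the trace forms --- the $a_v$ remain arbitrary free generators throughout. The missing idea is global: by Bayer-Fluckiger--Lenstra (\cite[Proposition 5.1]{BayerLenstra}, using that $|G|$ is odd) one may choose $b$ to be \emph{self-dual}, i.e.\ $Tr_h(sb,tb)=\delta_{st}$; then $\gamma\mapsto\gamma\cdot b$ is a $G$-isometry from $(\mathcal{O}_KG\cdot c,t_K)$ onto $(A_h,Tr_h)$, so $\mathrm{ucl}(A_h)=j_{(s)}(c)$ with no further choices. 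Self-duality of $b$ is equivalent to $\mathbf{r}_G(b)\mathbf{r}_G(b)^{[-1]}=1$ (Proposition~\ref{SDcriteria'}(b)), and this condition is manifestly preserved when one passes to $\mathbf{r}_G(b)^{-1}$ or to $\mathbf{r}_G(b_1)\mathbf{r}_G(b_2)$; that single observation is what ``lifts'' the computation from $\mathrm{Cl}$ to $\mathrm{UCl}$. Your appeal instead to ``the self-duality of $A_h$'' (the lattice) only shows $c\in J(KG_{(s)})$, i.e.\ that the class is defined; it does not show the class inverts or multiplies.

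Two further points need more than the gesture you give them. First, in (a) you assert that $g\mapsto g^{-1}$ on resolvends ``corresponds precisely to inversion in $\mathrm{UCl}$''; what actually has to be checked is that the element $a_v'$ with $\mathbf{r}_G(a_v')=\mathbf{r}_G(a_v)^{-1}$ is a free generator of $A_{h_v^{-1}}$ --- this is a genuine fact about the square root of the inverse different (\cite[Proposition 5.1(b)]{T}), not formal. Likewise in (b), the statement that $\mathbf{r}_G(a_{1,v})\mathbf{r}_G(a_{2,v})$ is the resolvend of a generator of $A_{(h_1h_2)_v}$ is \cite[Proposition 5.3(c)]{T} and uses that one of $(h_i)_v$ is unramified; your ``$A_{h_1h_2,v}$ decomposes as $A_{h_1,v}\cdot A_{h_2,v}$'' is the right heuristic but is not a proof. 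Once these citations and the self-dual $b$ are in place, the argument closes exactly as you outline: $a_v'=c_v^{-1}\cdot b'$ gives $\mathrm{ucl}(A_{h^{-1}})=j_{(s)}(c^{-1})$, and $a_v=c_{1,v}c_{2,v}\cdot b$ gives $\mathrm{ucl}(A_{h_1h_2})=j_{(s)}(c_1c_2)$.
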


Analogous to \cite[Theorems 1.3 and 1.6]{T}, we will also give a characterization of the set $\mathcal{A}^t_u(\mathcal{O}_KG)$ (see (\ref{tamecharu})) and then prove:

\begin{thm}\label{thm:subgroupu}
Let $K$ be a number field and let $G$ be a finite abelian \mbox{group of} odd order. Then, the set $\mathcal{A}_u^t(\mathcal{O}_KG)$ is a subgroup of $\mbox{UCl}(\mathcal{O}_KG)$.  Moreover, given $c\in \mathcal{A}_u^t(\mathcal{O}_KG)$ and a finite set $T$ of primes in $\mathcal{O}_K$, there exists $h\in H^1_t(\Omega_K,G)$ such that
\begin{enumerate}[(1)]
\item $K_h/K$ is a field extension;
\item $K_h/K$ is unramified at all $v\in T$;
\item $c=\mbox{ucl}(A_h)$.
\end{enumerate}
\end{thm}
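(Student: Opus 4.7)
My plan is to follow the blueprint of [T, Theorems 1.3 and 1.6], adapted from the locally free class group $\mbox{Cl}(\mathcal{O}_KG)$ to the unitary class group $\mbox{UCl}(\mathcal{O}_KG)$. The crucial inputs will be (i) the explicit characterization (\ref{tamecharu}) of $\mathcal{A}^t_u(\mathcal{O}_KG)$, to be derived earlier in the text in the style of McCulloh, which describes this set as the image in $\mbox{UCl}(\mathcal{O}_KG)$ of certain adelic resolvend data coming from tame cocycles, and (ii) the weak multiplicativity and inverse compatibility of $\mbox{gal}_{A,u}$ provided by Theorem~\ref{thm:weaku}.

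I would prove the ``moreover'' statement first, since the subgroup property will be a formal consequence of it. Given $c\in\mathcal{A}^t_u(\mathcal{O}_KG)$ and a finite set $T\subseteq M_K$, I would start with an arbitrary $h_0\in H^1_t(\Omega_K,G)$ realizing $c$, and then produce an auxiliary $h'\in H^1_t(\Omega_K,G)$ which (a) is unramified at every $v\in T\cup d(h_0)$, (b) has unitary class $\mbox{ucl}(A_{h'})$ equal to the element of $\mbox{UCl}(\mathcal{O}_KG)$ required to shift the ramification of $h_0$ away from $T$ without changing $c$, and (c) is such that $K_{h'}$, hence $K_{h_0 h'}$, is a field. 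The construction of $h'$ proceeds adelically: one writes down the required local behavior at $T\cup d(h_0)$ using the characterization (\ref{tamecharu}), and uses a Chebotarev density argument applied to a suitable ray class field of $K$ to realize these prescribed local data by a genuine global tame cocycle whose remaining ramification occurs at a single new prime of one's choice, chosen disjoint from $T\cup d(h_0)$. The same Chebotarev input lets one arrange that $K_{h_0 h'}/K$ be a field extension. An application of Theorem~\ref{thm:weaku}(b) to $h:=h_0 h'$, whose tameness follows from the disjointness of $d(h_0)$ and $d(h')$, then yields the desired $h$.

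With the ``moreover'' part established, the subgroup property is almost immediate. For closure under inversion, given $c=\mbox{ucl}(A_h)\in\mathcal{A}^t_u(\mathcal{O}_KG)$, the cocycle $h^{-1}$ still lies in $H^1_t(\Omega_K,G)$ because $K_{h^{-1}}/K$ is the same field extension as $K_h/K$ (only the $G$-action is inverted), so Theorem~\ref{thm:weaku}(a) gives $c^{-1}=\mbox{ucl}(A_{h^{-1}})\in\mathcal{A}^t_u(\mathcal{O}_KG)$. For closure under multiplication, given $c_1,c_2\in\mathcal{A}^t_u(\mathcal{O}_KG)$, fix any $h_1\in H^1_t(\Omega_K,G)$ realizing $c_1$ and apply the ``moreover'' part with $T:=d(h_1)$ to obtain $h_2\in H^1_t(\Omega_K,G)$ realizing $c_2$ with $d(h_1)\cap d(h_2)=\emptyset$. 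Since each prime ramified in $h_1h_2$ is ramified in exactly one of $h_1,h_2$, the composite is still tame, and Theorem~\ref{thm:weaku}(b) yields $c_1c_2=\mbox{ucl}(A_{h_1h_2})\in\mathcal{A}^t_u(\mathcal{O}_KG)$.

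The main obstacle will be the construction of $h'$ in the ``moreover'' step. One has to prescribe the local behavior of $h'$ at finitely many primes so as to cancel the contribution of $h_0$ modulo the image of units (which is precisely where $\mbox{UCl}$ differs from $\mbox{Cl}$), keep $h'$ tamely and controllably ramified, and ensure that the locally fabricated data glue to a genuine global $h'\in H^1_t(\Omega_K,G)$ whose associated $K_{h'}$ is not decomposed. Arranging all of this simultaneously is exactly the adelic-cum-Chebotarev argument that forms the technical heart of [T], and the real work in transporting it to the unitary setting will lie in tracking how the $\mbox{UCl}$-class of the resolvend, rather than its $\mbox{Cl}$-class, responds to the prescribed local modifications.
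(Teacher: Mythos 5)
Your reduction of the subgroup property to the ``moreover'' statement (inverses via Theorem~\ref{thm:weaku}(a), products by re-realizing $c_2$ away from $d(h_1)$ and applying Theorem~\ref{thm:weaku}(b)) is sound, and is a legitimate alternative to the paper's argument, which instead exhibits $j_{(s)}^{-1}(\mathcal{A}_u^t(\mathcal{O}_KG))$ directly as the kernel of a homomorphism $\upvarrho_u$ built from $rag$ and the subgroup $\eta(\mathcal{H}(KG_{(1)}))U(\mathcal{H}(\mathcal{O}_KG))\Theta^t_*(J(\Lambda(KG)))$. But your proof of the ``moreover'' part has a genuine structural flaw: you propose to take $h:=h_0h'$ with $d(h_0)\cap d(h')=\emptyset$, and for such a product one has $d(h_0h')=d(h_0)\cup d(h')\supseteq d(h_0)$. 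Multiplying by a disjointly ramified $h'$ can only \emph{add} ramification; it can never move the ramification of $h_0$ off $T$. So if $h_0$ happens to be ramified at some $v\in T$, your $h$ fails condition (2). Worse, to preserve $\mbox{ucl}(A_h)=c$ you would need $\mbox{ucl}(A_{h'})=1$, so $h'$ cannot ``shift'' anything. (A smaller gap: $K_{h'}$ being a field does not imply $K_{h_0h'}$ is a field, since surjectivity of $h'$ does not give surjectivity of the pointwise product $h_0h'$.)

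The correct route, which the paper takes, is to discard $h_0$ entirely rather than modify it by composition. One writes $rag(c)=\eta(r_G(b))^{-1}u\,\Theta^t_*(g)$ per the characterization (\ref{tamecharu}), then invokes the approximation theorem (Theorem~\ref{approx2}, i.e.\ McCulloh's Proposition 6.14) to replace $g$ by an $f\in\mathfrak{F}$ congruent to $g$ modulo $\lambda(\Lambda(KG)^\times)U'_{\mathfrak{m}}(\Lambda(\mathcal{O}_KG))$ with $f_v=1$ for all $v\in T$ and with the property that every $s\neq 1$ in $G(-1)$ has an $\Omega_K$-conjugate $\omega\cdot s$ with $f_{\omega\cdot s}\neq 1$. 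Absorbing the discrepancy into $b$ and $u$ via Theorem~\ref{approx1}, the homomorphism $h:=h_b$ attached to the adjusted resolvend is a \emph{new} tame realization of $c$: Theorem~\ref{char1} gives $j_{(s)}(c)=\mbox{ucl}(A_h)$, unramifiedness at $T$ (since $f_v=1$ forces $h_v$ unramified), and surjectivity of $h$ (since $\omega\cdot s\in h_v(\Omega_{K_v})$ whenever $f_v=f_{K_v,\omega\cdot s}$), whence $K_h$ is a field. This is the step your Chebotarev sketch is gesturing at, but it must be implemented as a wholesale re-realization of the id\`ele class, not as a twist of $h_0$ by a disjointly ramified auxiliary extension.
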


\begin{thm}\label{thm:wildu}Let $K$ be a number field and let $G$ be a finite abelian \mbox{group of} odd order. Let $h\in H^1_w(\Omega_K,G)$ and let $V$ denote the set of primes in $\mathcal{O}_K$ which are wildly ramified in $K_h/K$. If
\begin{enumerate}[(1)]
\item every $v\in V$ is unramified over $\mathbb{Q}$; and
\item the ramification index of every $v\in V$ in $K_h/K$ is prime,
\end{enumerate}
then we have $\mbox{ucl}(A_h)\in\mathcal{A}_u^t(\mathcal{O}_KG)$. 
\end{thm}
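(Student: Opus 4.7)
The proof should mirror the locally free class group version in \cite[Theorem 1.6]{T}, with the additional burden of tracking trace forms throughout. The plan is to use the idelic/resolvend characterization of the tame $A$-realizable set promised in (\ref{tamecharu}) for the proof of Theorem~\ref{thm:subgroupu}, and to verify that an idelic representative of $\mbox{ucl}(A_h)$ satisfies this characterization at every prime.

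At each tame prime $v \notin V$, the local component of the representative automatically has the required form, since the local module and the local form come from a tame local Galois algebra. All of the work is therefore at the wild primes $v \in V$. Fix such a $v$; since $v$ is wildly ramified with prime ramification index $e_v$ and $K_v$ is absolutely unramified, $e_v$ must equal the residue characteristic $p$ of $v$, the inertia subgroup $I_v \subset G$ has order $p$, and weak ramification forces the second ramification group $G_{v,2}$ to be trivial. In this rigid setting Erez's work \cite{Erez} supplies an explicit $\mathcal{O}_{K_v}G$-generator of $A_{h_v}$ together with an explicit formula for the induced trace form, so the local resolvend class of $(A_{h_v}, Tr_{h_v})$ can be computed directly.

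The core technical step is then to construct a tame local character $\chi_v \in \mbox{Hom}(\Omega_{K_v}, G)$ with $I_v$ as inertia whose local resolvend class, tracking both the $\mathcal{O}_{K_v}G$-module and the bilinear-form data, matches that of $h_v$ modulo the local unit subgroup divided out in the definition of $\mbox{UCl}(\mathcal{O}_KG)$. This is where I expect the main obstacle to lie: the analogous matching at the level of the locally free class group is carried out in \cite[Theorem 1.6]{T}, and upgrading it to the unitary level requires a careful comparison of Lagrange resolvents (or equivalent Gauss-sum type quantities) so that the trace forms, and not merely the underlying modules, agree. The hypothesis that $K_v$ is absolutely unramified is used essentially at this point, since it pins down the structure of the relevant local unit groups and renders the comparison computable; the primality of $e_v$ enters in ensuring $I_v$ is cyclic of order $p$ and so the local extension is of the very restricted shape covered by the explicit formulae.

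Once the local matching has been achieved at every $v \in V$, the assembled idelic representative of $\mbox{ucl}(A_h)$ satisfies the characterization (\ref{tamecharu}), and it follows that $\mbox{ucl}(A_h) \in \mathcal{A}_u^t(\mathcal{O}_KG)$, completing the proof.
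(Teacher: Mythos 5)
Your proposal follows the paper's proof: both reduce, via the characterization (\ref{tamecharu}) and a globally chosen self-dual normal basis generator $b$, to showing that at each prime $v$ the reduced resolvend $r_G(a_v)$ of a local generator of $A_{h_v}$ lies in $\mathcal{H}(\mathcal{O}_{K_v}G)\Theta^t_*(\Lambda(K_vG)^\times)$, which at tame primes is \cite[Theorem 10.3]{T} and at the wild primes is precisely \cite[Theorem 13.2]{T} (this is exactly where hypotheses (1) and (2) enter). The only point where you overestimate the remaining work is your ``core technical step'': no new comparison of trace forms is needed at the wild primes, because Proposition~\ref{ImTheta} already guarantees that $\Theta^t_*$ takes values in $\mathcal{H}(K_vG_{(1)})$, so once $b$ is self-dual the bilinear-form data is carried along automatically and the local computation is verbatim the one from the class-group setting.
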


\begin{remark}Notice that there is a natural homomorphism
\[
\Phi: \mbox{UCl}(\mathcal{O}_KG)\longrightarrow\mbox{Cl}(\mathcal{O}_KG);\hspace{1em}\mbox{ucl}((X,T))\mapsto \mbox{cl}(X)
\]
afforded by forgetting the given $G$-invariant symmetric $\mathcal{O}_K$-bilinear form $T$ on $X$ for any locally free $\mathcal{O}_KG$-module $X$. 
Theorems~\ref{thm:weaku}, ~\ref{thm:subgroupu}, and~\ref{thm:wildu} are therefore refinements of \cite[Theorems 1.2, 1.3, and 1.6]{T}, respectively. In fact, their proofs are essentially the same.
\end{remark}

Here is a brief outline of the contents of the rest of this paper.  In Section~\ref{s:3}, we will define the unitary class group, which was first introduced by J. Morales in \cite{Morales}. In Section~\ref{s:4}, we will give a brief review of Galois algebras and resolvends. Then, in Section~\ref{s:5}, we will recall the necessary definitions to give a characterization of the set $\mathcal{A}_u^t(\mathcal{O}_KG)$ (see (\ref{tamecharu})). \mbox{We will} prove Theorem~\ref{thm:weaku} in Subsection~\ref{s:5.1}, and then Theorems~\ref{thm:subgroupu} and ~\ref{thm:wildu} in Subsection~\ref{s:5.5}. To avoid repetition, we will use certain results from \cite{T} \mbox{without restating them.}

\section{Notation and Conventions}\label{s:2}

Throughout this paper, we will fix a number field $K$ and a finite group $G$. We will also use the convention that the homomorphisms in the cohomology groups considered are all continuous.

The symbol $F$ will denote a number field or a finite extension of $\mathbb{Q}_p$, where $p$ is a prime number. Given such an $F$, we will define:
\begin{align*}
\mathcal{O}_F&:=\mbox{the ring of integers in $F$};\\
F^c&:=\mbox{a fixed algebraic closure of $F$};\\
\Omega_F&:=\mathit{Gal}(F^c/F);\\
F^t&:=\mbox{the maximal tamely ramified extension of $F$ in $F^c$};\\
\Omega^t_F&:=\mathit{Gal}(K^t/K);\\
M_F&:=\mbox{the set of all finite primes in $F$};\\
[-1]&:=\mbox{the involution on $F^cG$ induced by the involution $s\mapsto s^{-1}$ on $G$};\\
t_F&:=\mbox{the symmetric $\mathcal{O}_F$-bilinear form $\mathcal{O}_FG\times\mathcal{O}_FG\longrightarrow\mathcal{O}_F$}\\
&\hspace{0.65cm}\mbox{for which $t_F(s,t)=\delta_{st}$ for all $s,t\in G$}.
\end{align*}
For $G$ abelian, we will further define:
\[
\widehat{G}:=\mbox{the group of irreducible $F^c$-valued characters on $G$}.
\]
We will also let $\Omega_F$ and $\Omega_F^t$ act trivially on $G$ from the left. Moreover, we will choose a compatible set $\{\zeta_n:n\in\mathbb{Z}^+\}$ of primitive roots of unity in $F^c$. 

For $F$ a number field and $v\in M_F$, we adopt the following notation:
\begin{align*}
F_v&:=\mbox{the completion of $F$ with respect to $v$};\\
i_v&:=\mbox{a fixed embedding $F^c\longrightarrow F_v^c$ extending the natural}\\
&\hspace{0.65cm}\mbox{embedding $F\longrightarrow F_v$};\\
\widetilde{i_v}&:=\mbox{the embedding $\Omega_{F_v}\longrightarrow\Omega_{F}$ induced by $i_v$}.
\end{align*}
Moreover, if $\{\zeta_n:n\in\mathbb{Z}^+\}$ is the chosen compatible set of primitive roots of unity in $F^c$, then for each $v\in M_F$ we choose $\{i_v(\zeta_n):n\in\mathbb{Z}^+\}$ to be the compatible set of primitive roots of unity in $F^c_v$.

\section{$G$-Forms and Unitary Class Groups}\label{s:3}

\subsection{$G$-Forms}\label{s:3.1} Let $F$ be a number field or a finite extension of $\mathbb{Q}_p$. We will give a brief review of $G$-forms over $\mathcal{O}_F$ and their basic properties.

\begin{definition}An \emph{$\mathcal{O}_FG$-lattice} is a left $\mathcal{O}_FG$-module which is finitely generated and projective as an $\mathcal{O}_F$-module. 
\end{definition}

\begin{definition}\label{Gform}
A \emph{$G$-form over $\mathcal{O}_F$} is pair $(X,T)$, where $X$ is an $\mathcal{O}_FG$-lattice and $T:X\times X\longrightarrow \mathcal{O}_F$ is $G$-invariant symmetric $\mathcal{O}_F$-bilinear form on \mbox{$X$. Two} $G$-forms $(X,T)$ and $(X',T')$ over $\mathcal{O}_F$ are said to be \emph{$G$-isometric over $\mathcal{O}_F$} if there exists an isomorphism $\varphi:X\longrightarrow X'$ of $\mathcal{O}_FG$-modules such that
\[
T'(\varphi(x),\varphi(y))=T(x,y)\hspace{1cm}\mbox{for all }x,y\in X.
\]
Such an isomorphism is called a \emph{$G$-isometry over $\mathcal{O}_F$}. The $G$-isometry class of $(X,T)$ will be denoted by $\mbox{ucl}((X,T))$.
\end{definition}

Given a $G$-form $(X,T)$ over $\mathcal{O}_F$, notice that the map $T$ extends uniquely to a $G$-invariant symmetric $F$-bilinear on $F\otimes_{\mathcal{O}_F} X$ via linearity. By abuse of notation, we will still use $T$ to denote this bilinear form.

\begin{definition}
Let $(X,T)$ be a $G$-form over $\mathcal{O}_F$. The \emph{dual of $X$ with respect to $T$}, or simply the \emph{dual of $X$}, is defined to be the $\mathcal{O}_F$-module 
\[
X^*:=\{x\in F\otimes_{\mathcal{O}_{F}}X\mid T(x,X)\subset\mathcal{O}_F\}.
\]
The $G$-form $(X,T)$ is said to be \emph{self-dual with respect to $T$}, or simply \emph{self-dual} if $X=X^*$. Moreover, an element $x\in F\otimes_{\mathcal{O}_{F}}X$ is said to be \emph{self dual with respect to $T$}, or simply \emph{self-dual} if
\[
T(sx,tx)=\delta_{st}\hspace{1cm}\mbox{for all }s,t\in G.
\]
\end{definition}

Recall that $t_F$ denotes the canonical symmetric $\mathcal{O}_F$-bilinear form on $\mathcal{O}_FG$ for which $t_F(s,t)=\delta_{st}$ for all $s,t\in G$. The $G$-forms $(X,T)$ over $\mathcal{O}_F$ which are $G$-isometric to $(\mathcal{O}_FG,t_F)$ are precisely those for which $X$ has a free self-dual generator over $\mathcal{O}_FG$.

\begin{prop}\label{prop:Gform}
A $G$-form $(X,T)$ over $\mathcal{O}_F$ is $G$-isometric to $(\mathcal{O}_FG,t_F)$ if and \mbox{only if} there exists $x\in X$ such that $X=\mathcal{O}_{F}G\cdot x$ and $x$ is self-dual.
\end{prop}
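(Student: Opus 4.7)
The plan is to verify the two implications essentially by writing down the obvious candidate map in each direction, with self-duality being exactly the condition needed to make the argument work.

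For the forward direction, assume $\varphi:(X,T)\longrightarrow(\mathcal{O}_FG,t_F)$ is a $G$-isometry, and set $x:=\varphi^{-1}(1_G)$ where $1_G\in G\subset\mathcal{O}_FG$. Since $\varphi$ is $\mathcal{O}_FG$-linear and $\mathcal{O}_FG=\mathcal{O}_FG\cdot 1_G$, we get $X=\mathcal{O}_FG\cdot x$ by applying $\varphi^{-1}$. For self-duality, for $s,t\in G$ we compute
\[
T(sx,tx)=t_F(\varphi(sx),\varphi(tx))=t_F(s,t)=\delta_{st}.
\]

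For the converse, assume $x\in X$ with $X=\mathcal{O}_FG\cdot x$ and $T(sx,tx)=\delta_{st}$ for all $s,t\in G$. Define the $\mathcal{O}_FG$-linear map
\[
\varphi:\mathcal{O}_FG\longrightarrow X;\hspace{1em}\varphi\Bigl(\sum_{s\in G}a_s s\Bigr):=\sum_{s\in G}a_s(sx).
\]
Surjectivity is immediate from $X=\mathcal{O}_FG\cdot x$. Injectivity is the one place where self-duality is used: if $\sum_s a_s(sx)=0$ in $X$, then pairing with $tx$ via $T$ and invoking self-duality gives
\[
0=T\Bigl(\sum_s a_s(sx),\,tx\Bigr)=\sum_s a_s\,T(sx,tx)=a_t
\]
for each $t\in G$. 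Hence $\varphi$ is an $\mathcal{O}_FG$-isomorphism, and on the distinguished $\mathcal{O}_F$-basis we have $T(\varphi(s),\varphi(t))=T(sx,tx)=\delta_{st}=t_F(s,t)$, which extends $\mathcal{O}_F$-bilinearly to show that $\varphi$ is a $G$-isometry.

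There isn't really a serious obstacle: the statement is essentially unpacking definitions, and the only nontrivial observation is that a self-dual $\mathcal{O}_FG$-generator is automatically a free generator, which falls out of the bilinear form argument above. The proof should fit in a short paragraph.
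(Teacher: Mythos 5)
Your proof is correct and follows the same route as the paper: take $x=\varphi^{-1}(1)$ in one direction and the map $\gamma\mapsto\gamma\cdot x$ in the other. You simply spell out the injectivity and isometry checks that the paper leaves as ``clearly.''
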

\begin{proof}If $(X,T)$ is $G$-isometric to $(\mathcal{O}_FG,t_F)$ and $\varphi:\mathcal{O}_FG\longrightarrow X$ is a $G$-isometry over $\mathcal{O}_F$, then clearly $x:=\varphi(1)$ is self-dual and $X=\mathcal{O}_FG\cdot x$. Conversely, if $x\in X$ is self-dual and $X=\mathcal{O}_FG\cdot x$, then
\[
\varphi:\mathcal{O}_FG\longrightarrow X;\hspace{1em}\varphi(\gamma):=\gamma\cdot x
\]
is a $G$-isometry over $\mathcal{O}_F$. This proves the claim.
\end{proof}

Recall that $[-1]$ denotes the involution on $F^cG$ induced by the involution $s\mapsto s^{-1}$ on $G$. This involution may be used to identify elements $c\in (FG)^\times$ for which $\mathcal{O}_FG\cdot c$ is self-dual and those which are self-dual.

\begin{prop}\label{SDcriteria}
Let $c\in (FG)^\times$. 
\begin{enumerate}[(a)]
\item The $\mathcal{O}_FG$-lattice $\mathcal{O}_FG\cdot c$ is self-dual if and only if $cc^{[-1]}\in(\mathcal{O}_FG)^\times$.
\item The element $c$ is self-dual if and only if $cc^{[-1]}=1$.
\end{enumerate}
\end{prop}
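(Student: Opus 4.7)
The plan is to rewrite the form $t_F$ in terms of the involution $[-1]$. Writing $\epsilon : FG \to F$ for the $F$-linear functional picking out the coefficient of the identity, a direct check on the basis $G$ yields the identity
\[
t_F(x,y) = \epsilon(x y^{[-1]}), \qquad x,y \in FG,
\]
since the coefficient of $1$ in $st^{-1}$ is exactly $\delta_{s,t}$. Because $G$ is abelian, $[-1]$ is moreover an $F$-algebra automorphism of $FG$, and this is what makes the ensuing manipulations clean.

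For part (a), I would first compute the dual of $\mathcal{O}_F G \cdot c$ explicitly. The condition $t_F(y,\gamma c) \in \mathcal{O}_F$ for all $\gamma \in \mathcal{O}_FG$ rewrites, via the identity above, as $\epsilon(y c^{[-1]} \gamma^{[-1]}) \in \mathcal{O}_F$ for all $\gamma \in \mathcal{O}_FG$. Since $\gamma^{[-1]}$ ranges over $\mathcal{O}_FG$ together with $\gamma$, specializing $\gamma^{[-1]} = g^{-1}$ for each $g \in G$ shows that this simply picks out every coefficient of $yc^{[-1]}$, so the condition is equivalent to $yc^{[-1]} \in \mathcal{O}_FG$. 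Hence the dual equals $\mathcal{O}_FG \cdot (c^{[-1]})^{-1}$, and equality of the two principal $\mathcal{O}_FG$-modules $\mathcal{O}_FG \cdot c$ and $\mathcal{O}_FG \cdot (c^{[-1]})^{-1}$ is equivalent to their ratio $c \cdot c^{[-1]}$ being a unit in $\mathcal{O}_FG$.

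For part (b), I would expand
\[
t_F(sc,tc) = \epsilon\bigl(sc \cdot (tc)^{[-1]}\bigr) = \epsilon\bigl(st^{-1} \cdot c c^{[-1]}\bigr),
\]
using both the commutativity of $G$ and the automorphism property of $[-1]$. Letting $w = st^{-1}$ range over $G$ and noting that $\epsilon(w x)$ equals the coefficient of $w^{-1}$ in $x$, the self-duality condition $t_F(sc,tc) = \delta_{s,t}$ for all $s,t\in G$ translates into: the coefficient of every $v \in G$ in $cc^{[-1]}$ is $\delta_{v,1}$, i.e.\ $cc^{[-1]} = 1$.

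The main point to watch is just the book-keeping around the involution — in particular, the fact that $(c^{[-1]})^{-1} = (c^{-1})^{[-1]}$ and that, for $G$ abelian, $[-1]$ is an algebra automorphism rather than an anti-automorphism. Once the identity $t_F(x,y) = \epsilon(xy^{[-1]})$ is in hand, both (a) and (b) are short direct calculations; there is no real analytic obstacle here, only care with notation.
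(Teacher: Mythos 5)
Your proof is correct and follows essentially the same route as the paper: both compute the dual lattice explicitly as $\mathcal{O}_FG\cdot(c^{[-1]})^{-1}$ for (a) and reduce (b) to reading off the coefficients of $cc^{[-1]}$; your identity $t_F(x,y)=\epsilon(xy^{[-1]})$ is just a repackaging of the paper's observation that $t_F(\gamma,sc)=t_F(\gamma c^{[-1]},s)$ is the coefficient of $s$ in $\gamma c^{[-1]}$.
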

\begin{proof}An element $\gamma\in FG$ lies in $\gamma \in(\mathcal{O}_FG\cdot c)^*$ if and only if
\begin{equation}\label{gamma}
t_F(\gamma,sc)\in \mathcal{O}_F\hspace{1cm}\mbox{for all }s\in G
\end{equation}
because $t_F$ is $\mathcal{O}_F$-bilinear. But for any $s\in G$, we have $t_F(\gamma,sc)=t_F(\gamma c^{[-1]},s)$, which is simply the coefficient of $s$ in $\gamma c^{[-1]}$. Hence, condition (\ref{gamma}) holds precisely when $\gamma c^{[-1]}\in\mathcal{O}_FG$. This means that
\[
(\mathcal{O}_FG\cdot c)^*=\mathcal{O}_FG\cdot (c^{[-1]})^{-1}
\]
and so $\mathcal{O}_FG\cdot c$ is self-dual if and only if $cc^{[-1]}\in(\mathcal{O}_FG)^\times$. This proves (a). 

For (b), simply observe that
\[
t_F(sc,tc)=t_F(s\cdot cc^{[-1]},t)\hspace{1cm}\mbox{for all }s,t\in G.
\]
It follows that $c$ is self-dual if and only if $cc^{[-1]}=1$.
\end{proof}

\begin{definition}
In view of Proposition~\ref{SDcriteria}, define
\begin{align*}
FG_{(s)}&:=\{c\in (FG)^\times\mid cc^{[-1]}\in (\mathcal{O}_FG)^\times\};\\
FG_{(1)}&:=\{c\in (FG)^\times\mid cc^{[-1]}=1\}.
\end{align*}
\end{definition}

Now, assume that $F$ is a number field. Given a $G$-form $(X,T)$ over $\mathcal{O}_F$, notice that the map $T$ extends uniquely to a $G$-invariant symmetric $\mathcal{O}_{F_v}$-bilinear on $X_v:=\mathcal{O}_{F_v}\otimes_{\mathcal{O}_F} X$ via linearity for each $v\in M_F$. We will denote these bilinear forms by $T_v$.

\begin{definition}\label{Giso}
A $G$-form $(X,T)$ over $\mathcal{O}_F$ is said to be \emph{locally $G$-isometric to $(\mathcal{O}_FG,t_F)$} if $(X_v,T_v)$ and $(\mathcal{O}_{F_v}G,t_{F_v})$ are $G$-isometric over $\mathcal{O}_{F_v}$ for all $v\in M_F$. We will write  $g(\mathcal{O}_FG)_s$ for the set of all such $G$-forms $(X,T)$ over $\mathcal{O}_F$ that is $G$-isometric to $(\mathcal{O}_FG\cdot c,t_F)$ for some $c\in J(FG)$.
\end{definition}

\subsection{Unitary Class Groups}\label{s:3.2} Let $F$ be a number field. Moreover, assume that $G$ is abelian and of odd order. Following \cite[Section]{Morales}, we will define the unitary class group of $\mathcal{O}_FG$. Our approach will be slightly different, \mbox{but the} resulting group is canonically isomorphic to that defined in \cite{Morales}.

As a set, the unitary class group of $\mathcal{O}_FG$ is defined to be
\[
\mbox{UCl}(\mathcal{O}_FG):=\{\mbox{ucl}((X,T)):(X,T)\in g(\mathcal{O}_FG)_s\}.
\]
We will show that this set has a group structure by giving it an id\`{e}lic description. The key lies in the following theorem.

\begin{thm}\label{ErezMor} Let $X$ be an $\mathcal{O}_FG$-lattice in $FG$. We have $(X,t_F)\in g(\mathcal{O}_FG)_s$ if and only if $X$ is locally free over $\mathcal{O}_FG$ and self-dual with respect to $t_F$.
\end{thm}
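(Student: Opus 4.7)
The plan is to reduce the statement to a local question at each $v \in M_F$ and invoke Propositions~\ref{prop:Gform} and~\ref{SDcriteria}. The forward direction is essentially formal: if $(X_v, t_{F_v})$ is $G$-isometric to $(\mathcal{O}_{F_v}G, t_{F_v})$ for every $v$, then $X_v$ is free via the underlying $\mathcal{O}_{F_v}G$-module isomorphism, so $X$ is locally free; moreover the isometry transports the self-duality of $(\mathcal{O}_{F_v}G, t_{F_v})$ (which is self-dual since $1 \in \mathcal{O}_{F_v}G$ is trivially a self-dual generator, cf.\ Proposition~\ref{SDcriteria}(b) with $c=1$) to $X_v$, giving global self-duality.

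For the converse, suppose $X$ is locally free and self-dual. At each $v$, write $X_v = \mathcal{O}_{F_v}G \cdot c_v$ for some $c_v \in (F_vG)^\times$. Proposition~\ref{SDcriteria}(a) then packages the self-duality of $X_v$ as the assertion that $\alpha_v := c_v c_v^{[-1]} \in (\mathcal{O}_{F_v}G)^\times$. By Proposition~\ref{prop:Gform}, producing a local $G$-isometry $(\mathcal{O}_{F_v}G, t_{F_v}) \to (X_v, t_{F_v})$ amounts to exhibiting a self-dual generator of $X_v$, necessarily of the form $u_v c_v$ with $u_v \in (\mathcal{O}_{F_v}G)^\times$; by Proposition~\ref{SDcriteria}(b) this reduces to solving the equation $u_v u_v^{[-1]} = \alpha_v^{-1}$ in the units of $\mathcal{O}_{F_v}G$.

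The main obstacle---and the one place where the hypotheses that $G$ is abelian and of odd order truly enter---is establishing surjectivity of the ``twisted norm'' map $u \mapsto u u^{[-1]}$ from $(\mathcal{O}_{F_v}G)^\times$ onto the symmetric units. I would invoke this as a known local result of Erez and Morales rather than re-derive it in full. Conceptually, decomposing $F_v^c G$ by characters reduces the equation to matching $\hat u(\chi)\hat u(\chi^{-1})$ with the $\chi$-component of $\alpha_v^{-1}$. Since $|G|$ is odd, the trivial character is the unique one fixed by $\chi \mapsto \chi^{-1}$, so nontrivial characters pair up into orbits $\{\chi, \chi^{-1}\}$ on which $\hat u$ can be chosen with no obstruction. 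On the trivial-character component the required square root exists because $\epsilon(\alpha_v) = \epsilon(c_v c_v^{[-1]}) = \epsilon(c_v)^2$ is automatically a square in $\mathcal{O}_{F_v}$; integrality of $u_v$ and Galois-equivariance from $F_v^c$ down to $F_v$ are then secured by a Hensel-type lifting that again exploits the oddness of $|G|$. Once the surjectivity is in hand, the required $u_v$ for each $v$ exists, which assembles into the sought local $G$-isometries.
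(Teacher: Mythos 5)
Your proposal is correct and follows essentially the same route as the paper: the forward direction is the formal transport of freeness and self-duality through the local isometries (Proposition~\ref{prop:Gform}), and the converse is reduced to the local statement of Erez and Morales, which the paper simply cites as \cite[Corollary 2.4]{ErezMorales}. One caution: the map $u\mapsto uu^{[-1]}$ on $(\mathcal{O}_{F_v}G)^{\times}$ is \emph{not} surjective onto all symmetric units (for $G$ trivial this would assert that every local unit is a square); the statement you actually need and actually use --- since $\alpha_v=c_vc_v^{[-1]}$ --- is that $F_vG_{(s)}=(\mathcal{O}_{F_v}G)^{\times}\,F_vG_{(1)}$, i.e.\ surjectivity onto those symmetric units already of the form $cc^{[-1]}$ with $c\in(F_vG)^{\times}$, which is exactly the Erez--Morales local result.
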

\begin{proof}If $(X,t_F)\in g(\mathcal{O}_FG)_s$, then $X$ is locally free over $\mathcal{O}_FG$ and self-dual with respect to $t_F$ by Proposition~\ref{prop:Gform}. As for the converse, see \cite[Corollary 2.4]{ErezMorales}; we remark that this  requires that $G$ is abelian and of odd order.
\end{proof}

Next, we give an id\`{e}lic description of the set $\mbox{UCl}(\mathcal{O}_KG)$.

\begin{definition}
Let $J(FG)$ and $J(FG_{(s)})$ be the restricted direct \mbox{products of} the groups $(F_vG)^\times$ and $F_vG_{(s)}$, respectively, with respect to the subgroups $(\mathcal{O}_{F_v}G)^\times$ for $v\in M_F$. Moreover, let
\[
\partial:(FG)^\times\longrightarrow J(FG)
\]
be the diagonal map and let
\[
U(\mathcal{O}_FG):=\prod_{v\in M_F}(\mathcal{O}_{F_v}G)^\times
\]
be the group of unit id\`{e}les. 
\end{definition}

For each id\`{e}le $c\in J(FG)$, define
\[
\mathcal{O}_FG\cdot c:=\Bigg(\bigcap_{v\in M_F}\mathcal{O}_{F_v}G\cdot c_v\Bigg)\cap FG.
\]
It is well-known that the locally free $\mathcal{O}_FG$-lattices in $FG$ are all of this form.

\begin{prop}\label{inFG}
Let $c,c'\in J(FG)$. 
\begin{enumerate}[(a)]
\item The $G$-form $(\mathcal{O}_FG\cdot c,t_F)$ belongs to $g(\mathcal{O}_FG)_s$ if and only if $c\in J(FG_{(s)})$.
\item The $G$-forms $(\mathcal{O}_FG\cdot c,t_F)$ and $(\mathcal{O}_FG\cdot c',t_F)$ are $G$-isometric over $\mathcal{O}_F$ if and only if
\[
c'c^{-1}\in\partial(FG_{(1)})U(\mathcal{O}_{F}G)
\]
\end{enumerate}
\end{prop}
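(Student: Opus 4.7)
My plan is to treat the two parts separately, with part (a) following from Theorem \ref{ErezMor} together with the local form of Proposition \ref{SDcriteria}(a), and part (b) following from a direct analysis of the $G$-isometries between two lattices of the form $\mathcal{O}_FG\cdot c$ and $\mathcal{O}_FG\cdot c'$.

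For part (a), I would invoke Theorem \ref{ErezMor}: $(\mathcal{O}_FG\cdot c,t_F)$ lies in $g(\mathcal{O}_FG)_s$ exactly when $\mathcal{O}_FG\cdot c$ is locally free (automatic from the id\`{e}lic construction, which yields $(\mathcal{O}_FG\cdot c)_v=\mathcal{O}_{F_v}G\cdot c_v$ at each $v\in M_F$) and self-dual with respect to $t_F$. Since $\mathcal{O}_F$ is Dedekind and duality with respect to $t_F$ commutes with localization for locally free lattices, self-duality of $\mathcal{O}_FG\cdot c$ is equivalent to self-duality of $\mathcal{O}_{F_v}G\cdot c_v$ at every $v$. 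Applying Proposition \ref{SDcriteria}(a) over $F_v$ identifies the latter with $c_vc_v^{[-1]}\in(\mathcal{O}_{F_v}G)^\times$, i.e.\ $c_v\in F_vG_{(s)}$. Imposing this at every prime recovers precisely $c\in J(FG_{(s)})$.

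For part (b), I would first pin down what any $G$-isometry $\varphi:(\mathcal{O}_FG\cdot c,t_F)\longrightarrow(\mathcal{O}_FG\cdot c',t_F)$ must look like. Both lattices are rank-one locally free $\mathcal{O}_FG$-modules sharing the generic fibre $FG$, so $\varphi$ extends uniquely to an $FG$-module automorphism of $FG$; as $G$ is abelian, $FG$ is commutative, hence this automorphism is multiplication by $\alpha:=\varphi(1)\in(FG)^\times$. The isometry condition $t_F(\alpha x,\alpha y)=t_F(x,y)$, tested at $(x,y)=(s,t)\in G\times G$, rewrites via the identity $t_F(sc,tc)=t_F(s\cdot cc^{[-1]},t)$ proved inside Proposition \ref{SDcriteria}(b) (applied with $c$ there replaced by $\alpha$) as $t_F(s\cdot\alpha\alpha^{[-1]},t)=\delta_{st}$ for all $s,t\in G$. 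Non-degeneracy of $t_F$ then forces $\alpha\alpha^{[-1]}=1$, so $\alpha\in FG_{(1)}$. The remaining condition $\alpha(\mathcal{O}_FG\cdot c)=\mathcal{O}_FG\cdot c'$ translates locally at each $v$ as $c_v'c_v^{-1}\alpha^{-1}\in(\mathcal{O}_{F_v}G)^\times$, which in id\`{e}lic form reads $c'c^{-1}\in\partial(\alpha)U(\mathcal{O}_FG)\subset\partial(FG_{(1)})U(\mathcal{O}_FG)$. The converse runs in reverse: any factorization $c'c^{-1}=\partial(\alpha)u$ with $\alpha\in FG_{(1)}$ and $u\in U(\mathcal{O}_FG)$ makes $x\mapsto\alpha x$ the required $G$-isometry.

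I do not anticipate a serious obstacle: the whole argument is essentially the id\`{e}lic repackaging of Proposition \ref{SDcriteria}, combined with Theorem \ref{ErezMor} to pass from a local self-duality criterion to a global one in (a). The one point deserving care is the claim that every $\mathcal{O}_FG$-isomorphism between the two lattices is scalar multiplication by some element of $(FG)^\times$; this rests on the lattices sharing the generic fibre $FG$ and on the commutativity of $FG$ (i.e.\ the abelianness of $G$).
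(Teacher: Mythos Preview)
Your proposal is correct and follows essentially the same route as the paper: part (a) is deduced from Theorem~\ref{ErezMor} together with Proposition~\ref{SDcriteria}(a), and part (b) is obtained by recognizing that any $\mathcal{O}_FG$-isomorphism between the two lattices is multiplication by some $\alpha\in(FG)^\times$, then reducing the isometry condition to $\alpha\alpha^{[-1]}=1$ via the identity underlying Proposition~\ref{SDcriteria}(b). The only cosmetic difference is that the paper tests the isometry condition at pairs of the form $(c,sc)$ while you test at $(s,t)\in G\times G$ after extending to the generic fibre $FG$; both computations yield $\alpha\alpha^{[-1]}=1$ in the same way.
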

\begin{proof}For (a), it follows directly from Proposition~\ref{SDcriteria} (a) and Theorem~\ref{ErezMor}. As for (b), observe that an isomorphism $\mathcal{O}_FG\cdot c\longrightarrow \mathcal{O}_FG\cdot c'$ is of the form
\[
\varphi:\mathcal{O}_FG\cdot c\longrightarrow \mathcal{O}_FG\cdot c';\hspace{1em}\varphi(x):=\gamma\cdot x,
\]
where $\gamma\in (FG)^\times$. Moreover, the map $\varphi$ is well-defined and is an isomorphism of $\mathcal{O}_FG$-modules if and only if
\begin{equation}\label{c'c}
c'c^{-1}\in\partial(\gamma)\cdot U(\mathcal{O}_FG).
\end{equation}
Morever, since $t_F$ is $\mathcal{O}_F$-bilinear and $G$-invariant, the map $\varphi$ is a $G$-isometry over $\mathcal{O}_F$ if and only if
\[
t_F(\gamma\cdot c,\gamma\cdot sc)=t_F(c,sc)\hspace{1cm}\mbox{for all }s\in G.
\]
But the above simplifies to
\[
t_F(\gamma\gamma^{[-1]}cc^{[-1]},s)=t_F(cc^{[-1]},s)\hspace{1cm}\mbox{for all }s\in G,
\]
which holds precisely when $\gamma\gamma^{[-1]}cc^{[-1]}=cc^{[-1]}$, or equivalently $\gamma\gamma^{[-1]}=1$. This shows that $\varphi$ is a $G$-isometry over $\mathcal{O}_F$ if and only if $\gamma\in FG_{(1)}$, and the claim now follows from (\ref{c'c}).
\end{proof}

Now, consider the map
\[
j_{(s)}:J(FG_{(s)})\longrightarrow\mbox{UCl}(\mathcal{O}_FG);
\hspace{1em}j_{(s)}(c):=\mbox{ucl}((\mathcal{O}_FG\cdot c,t_F)).
\]
By Proposition~\ref{inFG}, the map $j_{(s)}$ is well-defined and it induces an injection
\begin{equation}\label{isoUCG}
\frac{J(FG_{(s)})}{\partial(FG_{(1)})U(\mathcal{O}_{F}G)}\longrightarrow\mbox{UCl}(\mathcal{O}_FG).
\end{equation}
By definition of $g(\mathcal{O}_FG)_s$, the above is also a surjection and hence a bijection. Since the quotient on the left is a group, this bijection induces a group structure on $\mbox{UCl}(\mathcal{O}_FG)$.

\begin{definition}\label{UCG}
The \emph{unitary class group of $\mathcal{O}_FG$} is defined to be the set
\[
\mbox{UCl}(\mathcal{O}_FG):=\{\mbox{ucl}((X,T)):(X,T)\in g(\mathcal{O}_FG)_s\}
\]
equipped with the group structure induced by the bijection (\ref{isoUCG}).
\end{definition}

\section{Galois Algebras and Resolvends}\label{s:4}

Let $F$ be a number field or a finite extension of $\mathbb{Q}_p$. The group $G$ is arbitrary for the moment, but will be soon assumed to be abelian. We will give a brief review of Galois algebras and resolvends (see \cite[Section 1]{McCulloh} for more details).

\begin{definition}\label{GaloisAlg}A \emph{Galois algebra over $F$ with group $G$} or \emph{$G$-Galois $F$-algebra} is a commutative semi-simple $F$-algebra $N$ on which $G$ acts on the left as a group of automorphisms such that $N^{G}=F$ and $[N:F]=|G|$. Two $G$-Galois $F$-algebras are \emph{isomorphic} if there is an $F$-algebra isomorphism between them which preserves the action of $G$.
\end{definition}

Recall that $\Omega_F$ acts trivially on $G$. Then, the set of isomorphism classes of $G$-Galois $F$-algebras is in one-one correspondence with the pointed set
\begin{equation}\label{ptset}
H^1(\Omega_F,G):=\mbox{Hom}(\Omega_F,G)/\mbox{Inn}(G).
\end{equation}
In particular, each $h\in\mbox{Hom}(\Omega_F,G)$ is associated to the $F$-algebra
\[
F_{h}:=\mbox{Map}_{\Omega_F}(^{h}G,F^{c}),
\]
where $^{h}G$ is the group $G$ endowed with the $\Omega_F$-action given by
\[
(\omega\cdot s):=h(\omega)s\hspace{1cm}\mbox{for $s\in G$ and $\omega\in\Omega_F$}.
\]
The $G$-action on $F_{h}$ is defined by
\[
(s\cdot a)(t):=a(ts)\hspace{1cm}\mbox{for $a\in F_h$ and $s,t\in G$}.
\]
Now, choose a set $\{s_i\}$ of coset representatives for $h(\Omega_F)\backslash G$, then each $a\in F$ is determined by the values $a(s_i)$, and clearly each $a(s_i)$ may be arbitrarily chosen provided that it is fixed by all $\omega\in\ker(h)$. Hence, if
\begin{equation}\label{Fh}
F^{h}:=(F^{c})^{\ker(h)},
\end{equation}
then the choices of the coset representatives $\{s_i\}$ induce an isomorphism
\[
F_{h}\simeq \prod_{h(\Omega_F)\backslash G}F^{h}
\]
of $F$-algebras. Since $h$ induces an isomorphism $\mathit{Gal}(F^h/F)\simeq h(\Omega_F)$, we have
\[
[F_h:F]=[G:h(\Omega_F)][F^h:F]=|G|.
\]
Viewing $F$ as embedded in $F_h$ as the constant $F$-valued functions, we have $F_h^G=F$ as well. Hence, indeed $F_h$ is a $G$-Galois $F$-algebra.

It is not difficult to verify that every $G$-Galois $F$-algebra is isomorphic to some $F_h$ arising from a homomorphism $h\in\mbox{Hom}(\Omega_F,G)$, and that for $h,h'\in\mbox{Hom}(\Omega_F,G)$ we have $F_{h}\simeq F_{h'}$ if and only if $h$ and $h'$ differ by an element in $\mbox{Inn}(G)$. Hence, indeed the set of isomorphism classes of $G$-Galois $F$-algebras is in bijection with (\ref{ptset}).

In the rest of this section, we will assume that $G$ is abelian. In this case, the pointed set $H^1(\Omega_F,G)$ is equal to $\mbox{Hom}(\Omega_F,G)$ and in particular is a group.

\begin{definition}
Given $h\in\mbox{Hom}(\Omega_F,G)$, let $F^h$ be as in (\ref{Fh}). Let $\mathcal{O}^h:=\mathcal{O}_{F^h}$ and define the \emph{ring of integers of $F_h$} by
\[
\mathcal{O}_h:=\mbox{Map}_{\Omega_F}(^hG,\mathcal{O}^h).
\]
If the inverse different of $F^h/F$ has a square root, denote it by $A^h$ and define the \emph{square root of the inverse different of $F_h/F$} by
\[
A_h:=\mbox{Map}_{\Omega_F}(^hG,A^h).
\]
\end{definition}

\begin{remark}\label{localize}
For $F$ a number field and $h\in\mbox{Hom}(\Omega_F,G)$, define
\[
h_v\in\mbox{Hom}(\Omega_{F_v},G);\hspace{1em}h_v:=h\circ\widetilde{i_v}
\]
for each $v\in M_F$. It is proved in \cite[(1.4)]{McCulloh} that
\[
(F_v)_{h_v}\simeq F_v\otimes_FF_h.
\]
Consequently, we have
\begin{align*}
\mathcal{O}_{h_v}&\simeq\mathcal{O}_{F_v}\otimes_{\mathcal{O}_F}\mathcal{O}_h;\\
A_{h_v}&\simeq\mathcal{O}_{F_v}\otimes_{\mathcal{O}_F}A_h,
\end{align*}
where we implicitly assume that $A^h$ and $A^{h_v}$ exist in the second isomorphism.
\end{remark}

\begin{definition}\label{ramification}
Given $h\in\mbox{Hom}(\Omega_F,G)$, we say that $F_h/F$ or $h$ is \emph{unramified} if $F^h/F$ is unramified. Similarly for \emph{tame}, \emph{wild}, and \emph{weakly ramified}. Recall that a Galois extension over $F$ is said to be weakly ramified if all of the second ramification groups (in lower numbering) attached to it are trivial.
\end{definition}

\begin{remark}\label{tamesubgp}
A homomorphism $h\in\mbox{Hom}(\Omega_F,G)$ is tame if and only if it factors through the quotient map $\Omega_F\longrightarrow\Omega_F^t$. Hence, the subset of $\mbox{Hom}(\Omega_F,G)$ consisting of the tame homomorphisms may be naturally identified with $\mbox{Hom}(\Omega_F^t,G)$, and is in particular a subgroup of $\mbox{Hom}(\Omega_F,G)$.
\end{remark}

Next, consider the $F^c$-algebra $\mbox{Map}(G,F^c)$ on which we let $G$ act via
\[
(s\cdot a)(t):=a(ts)\hspace{1cm}\mbox{for $a\in\mbox{Map}(\Omega_F,G)$ and $s,t\in G$}.
\]
Note that $F_h$ is an $FG$-submodule of $\mbox{Map}(G,F^c)$ for all $h\in\mbox{Hom}(\Omega_F,G)$. 

\begin{definition}\label{resolvend}
The \emph{resolvend map} $\mathbf{r}_{G}:\mbox{Map}(G,F^{c})\longrightarrow F^{c}G$ is defined by
\[
\mathbf{r}_{G}(a):=\sum\limits _{s\in G}a(s)s^{-1}.
\]
\end{definition}

It is clear that $\mathbf{r}_{G}$ is an isomorphism of $F^cG$-modules, but not an isomorphism of $F^cG$-algebras because it does not preserve multiplication. Moreover, given $a\in\mbox{Map}(G,F^c)$, we have that $a\in F_h$ if and only if
\begin{equation}\label{resol1}
\omega\cdot\mathbf{r}_{G}(a)=\mathbf{r}_{G}(a)h(\omega)
\hspace{1cm}\mbox{for all }\omega\in\Omega_F.
\end{equation}
In particular, if $\mathbf{r}_{G}(a)$ is invertible, then $h$ is given by
\begin{equation}\label{resol2}
h(\omega)=\mathbf{r}_{G}(a)^{-1}(\omega\cdot\mathbf{r}_{G}(a))
\hspace{1cm}\mbox{for all $\omega\in\Omega_F$}.
\end{equation}
The next proposition shows that resolvends may be used to identify elements $a\in F_h$ for which $F_h=FG\cdot a$ or $\mathcal{O}_h=\mathcal{O}_FG\cdot a$.

\begin{prop}\label{NBG}Let $a\in F_h$.
\begin{enumerate}[(a)]
\item We have $F_h=FG\cdot a$ if and only if $\mathbf{r}_{G}(a)\in (F^{c}G)^{\times}$.
\item We have $\mathcal{O}_h=\mathcal{O}_FG\cdot a$ with $h$ unramified if and only if $\mathbf{r}_G(a)\in(\mathcal{O}_{F^c}G)^\times$. Furthermore, if $F$ is a finite extension of $\mathbb{Q}_p$ and $h$ is unramified, then there exists $a\in\mathcal{O}_h$ such that $\mathcal{O}_h=\mathcal{O}_FG\cdot a$.
\end{enumerate}
\end{prop}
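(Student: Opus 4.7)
The plan is to prove both parts by exploiting that the resolvend map $\mathbf{r}_{G}$ is an $F^{c}G$-module isomorphism from $\mathrm{Map}(G,F^{c})$ onto $F^{c}G$, together with the standard fact that the canonical map $F^{c}\otimes_{F}F_{h}\longrightarrow\mathrm{Map}(G,F^{c})$ is an isomorphism of $F^{c}$-algebras. Composing these yields an $F^{c}G$-module isomorphism $F^{c}\otimes_{F}F_{h}\cong F^{c}G$ sending $1\otimes a$ to $\mathbf{r}_{G}(a)$, so that generation questions for $F_{h}$ over $FG$ translate into generation questions for $F^{c}G$ over itself.

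For (a), since $F^{c}$ is faithfully flat over $F$, the inclusion $FG\cdot a\subseteq F_{h}$ is an equality if and only if the same holds after tensoring with $F^{c}$. Under the identification above, this becomes $F^{c}G\cdot\mathbf{r}_{G}(a)=F^{c}G$, which is equivalent to $\mathbf{r}_{G}(a)\in(F^{c}G)^{\times}$.

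For (b) I will mirror this argument at the integral level. When $h$ is unramified, $F^{h}/F$ is unramified, so $\mathcal{O}^{h}\otimes_{\mathcal{O}_{F}}\mathcal{O}_{F^{c}}$ is a finite product of copies of $\mathcal{O}_{F^{c}}$; combined with $\mathcal{O}_{h}=\mathrm{Map}_{\Omega_{F}}({}^{h}G,\mathcal{O}^{h})$, this upgrades the $F^{c}G$-isomorphism of the previous paragraph to an $\mathcal{O}_{F^{c}}G$-module isomorphism $\mathcal{O}_{F^{c}}\otimes_{\mathcal{O}_{F}}\mathcal{O}_{h}\cong\mathcal{O}_{F^{c}}G$ sending $1\otimes a\mapsto\mathbf{r}_{G}(a)$. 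The forward implication in (b) then follows exactly as in (a). For the converse, given $\mathbf{r}_{G}(a)\in(\mathcal{O}_{F^{c}}G)^{\times}$, the relation $a\in\mathcal{O}_{h}$ is automatic: the values $a(s)$ lie in $F^{h}$ by $\Omega_{F}$-equivariance of $a$ (from (\ref{resol1})) and in $\mathcal{O}_{F^{c}}$ by integrality of $\mathbf{r}_{G}(a)$, hence in $F^{h}\cap\mathcal{O}_{F^{c}}=\mathcal{O}^{h}$. To see that $h$ is unramified, I fix any prime $v$ of $F$, any $\mathfrak{m}$ of $\mathcal{O}_{F^{c}}$ above $v$, and any $\omega$ in the corresponding inertia subgroup. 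Then $\omega$ acts trivially on $\mathcal{O}_{F^{c}}/\mathfrak{m}$, so by (\ref{resol2}) the image of $h(\omega)$ in $(\mathcal{O}_{F^{c}}/\mathfrak{m})G$ equals $1$; since distinct elements of $G$ remain linearly independent in this quotient, this forces $h(\omega)=1$. Once $h$ is known to be unramified, $\mathcal{O}_{h}=\mathcal{O}_{F}G\cdot a$ is recovered by the same scalar-extension argument.

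For the last assertion, when $F$ is local and $h$ is unramified I invoke the classical normal integral basis theorem of Noether applied to $\mathcal{O}^{h}/\mathcal{O}_{F}$, then combine a normal integral basis generator with a choice of coset representatives for $h(\Omega_{F})\backslash G$ to produce an $a\in\mathcal{O}_{h}$ with $\mathcal{O}_{h}=\mathcal{O}_{F}G\cdot a$. The main obstacle should be the bookkeeping required to upgrade the scalar-extension isomorphism from the $F^{c}$-level to the $\mathcal{O}_{F^{c}}$-level: this is precisely where the unramified hypothesis is genuinely used, and care is required to keep track of the non-canonical coset decomposition of $G$ under $h(\Omega_{F})$ in the description $\mathcal{O}_{h}=\mathrm{Map}_{\Omega_{F}}({}^{h}G,\mathcal{O}^{h})$.
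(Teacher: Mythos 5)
Your proof is correct. The paper does not actually prove this proposition --- it only cites \cite{McCulloh} (Proposition 1.8 for (a), (2.11) for (b)) and Noether's theorem --- and your argument is essentially a reconstruction of the standard proofs behind those citations: faithfully flat descent along $F\to F^c$ (resp.\ $\mathcal{O}_F\to\mathcal{O}_{F^c}$) through the resolvend isomorphism, the observation that inertia acts trivially on residue fields so that $h(\omega)\equiv 1$ in $(\mathcal{O}_{F^c}/\mathfrak{m})G$ forces $h(\omega)=1$, and Noether's criterion plus induction from $h(\Omega_F)$ to $G$ for the last claim. The one step that deserves the care you flag is the identification $\mathcal{O}_{F^c}\otimes_{\mathcal{O}_F}\mathcal{O}_h\cong\mathrm{Map}(G,\mathcal{O}_{F^c})$: it rests on $\mathcal{O}^h\otimes_{\mathcal{O}_F}\mathcal{O}_{F^c}\cong\prod_{\sigma}\mathcal{O}_{F^c}$, which holds precisely because $F^h/F$ is unramified at every finite prime (so $\mathcal{O}^h/\mathcal{O}_F$ is \'etale with unit discriminant); this is exactly where the hypothesis is used, and your treatment of it is sound.
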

\begin{proof}See \cite[Proposition 1.8]{McCulloh} for (a) and \cite[(2.11)]{McCulloh} for the first claim in (b). As for the second claim in (b), it follows from a classical theorem of Noether, or alternatively from \cite[Proposition 5.5]{McCulloh}.
\end{proof}

Now, let $Tr:\mbox{Map}(G,F^c)\longrightarrow F^cG$ be the standard algebra trace map defined by
\[
Tr(a):=\sum_{s\in G}a(s).
\]
This restricts to the trace $Tr_h: F_h\longrightarrow F$ of $F_h$ for each $h\in\mbox{Hom}(\Omega_F,G)$. By abuse of notation, we will also write $Tr_h$ for the $G$-invariant symmetric $F$-bilinear form $(a,b)\mapsto Tr_h(ab)$ on $F_h$ induced by $Tr_h$.

Resolvends may also be used to identify elements $a\in F_h$ for which $\mathcal{O}_FG\cdot a$ is a full self-dual $\mathcal{O}_FG$-lattice in $F_h$ and those which are self-dual with respect to $Tr_h$ (cf. Proposition~\ref{SDcriteria}). Notice that $\mathcal{O}_FG\cdot a$ is a full $\mathcal{O}_FG$-lattice in $F_h$ if and only if $\mathbf{r}_G(a)\in (F^cG)^\times$ by Proposition~\ref{NBG} (a).

\begin{prop}\label{SDcriteria'}Let $a\in F_h$ be such that $\mathbf{r}_G(a)\in (F^cG)^\times$.
\begin{enumerate}[(a)]
\item
The $\mathcal{O}_FG$-lattice $\mathcal{O}_FG\cdot a$ is self-dual if and only if $\mathbf{r}_G(a)\mathbf{r}_G(a)^{[-1]}\in(\mathcal{O}_FG)^\times$.
\item The element $a$ is self-dual if and only if $\mathbf{r}_G(a)\mathbf{r}_G(a)^{[-1]}=1$.
\end{enumerate}
\end{prop}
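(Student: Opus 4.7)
The plan is to imitate the proof of Proposition~\ref{SDcriteria}, first transporting the bilinear form $Tr_h$ on $F_h$ to the resolvend side inside $F^cG$. Two preparatory identities are needed. For $a \in F_h$ and $s \in G$, a direct computation from Definition~\ref{resolvend} gives $\mathbf{r}_G(s \cdot a) = s\,\mathbf{r}_G(a)$, and consequently $\mathbf{r}_G(\lambda a) = \lambda\,\mathbf{r}_G(a)$ for every $\lambda \in FG$. Secondly, expanding $\mathbf{r}_G(a)\mathbf{r}_G(b)^{[-1]} = \bigl(\sum_s a(s) s^{-1}\bigr)\bigl(\sum_t b(t) t\bigr)$ and reading off the coefficient of the identity yields $Tr_h(ab) = \sum_s a(s)b(s)$ as the coefficient of $1$ in $\mathbf{r}_G(a)\mathbf{r}_G(b)^{[-1]}$ for all $a,b \in F_h$. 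Since $G$ is abelian, $[-1]$ is a ring homomorphism on $F^cG$, and combining these gives, for $\gamma \in F_h$ and $s \in G$,
\[
Tr_h(\gamma, sa) \;=\; \text{coefficient of } s \text{ in } \mathbf{r}_G(\gamma)\mathbf{r}_G(a)^{[-1]}.
\]

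Next, set $N := \mathbf{r}_G(a)\mathbf{r}_G(a)^{[-1]}$. Applying $\omega \cdot \mathbf{r}_G(a) = \mathbf{r}_G(a)h(\omega)$ from (\ref{resol1}) twice, the factors $h(\omega)$ and $h(\omega)^{-1}$ cancel in $\omega \cdot N$, so $N$ is fixed by $\Omega_F$ and lies in $FG$; since $\mathbf{r}_G(a)\in(F^cG)^\times$ by hypothesis, $N$ is also a unit in $F^cG$, so in fact $N \in (FG)^\times$. Part (b) is then immediate by specializing the displayed identity to $\gamma = ta$: using $\mathbf{r}_G(ta)\mathbf{r}_G(a)^{[-1]} = tN$, one gets $Tr_h(ta, sa) = $ coefficient of $st^{-1}$ in $N$, so $Tr_h(sa,ta) = \delta_{s,t}$ for all $s,t \in G$ if and only if $N = 1$.

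For part (a), Proposition~\ref{NBG}(a) gives $F_h = FG \cdot a$, so every $\gamma \in F_h$ can be written as $\gamma = \lambda a$ with $\lambda \in FG$; then $\mathbf{r}_G(\gamma)\mathbf{r}_G(a)^{[-1]} = \lambda N$, and by the displayed identity $\gamma \in (\mathcal{O}_FG\cdot a)^*$ precisely when $\lambda N \in \mathcal{O}_FG$. Since $N \in (FG)^\times$, this identifies the dual as $(\mathcal{O}_FG\cdot a)^* = (\mathcal{O}_FG\cdot N^{-1})\cdot a$, and hence $\mathcal{O}_FG \cdot a$ is self-dual if and only if $N \in (\mathcal{O}_FG)^\times$. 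The only step going beyond formal computation is the Galois-invariance argument that places $N$ in $FG$ rather than merely $F^cG$; after that, the proof is a direct parallel of Proposition~\ref{SDcriteria} with the role of $c$ played by $\mathbf{r}_G(a)$.
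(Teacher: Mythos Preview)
Your proof is correct and follows essentially the same route as the paper: the key identity you derive, that the coefficient of $s$ in $\mathbf{r}_G(\gamma)\mathbf{r}_G(a)^{[-1]}$ equals $Tr_h(\gamma\cdot(s\cdot a))$, is exactly the ``simple calculation'' $\mathbf{r}_G(a)\mathbf{r}_G(b)^{[-1]}=\sum_{s}Tr((s\cdot a)b)\,s^{-1}$ that the paper records, and your treatment of (a) simply spells out the argument the paper delegates to \cite[Proposition~2.8]{T}. Your Galois-invariance check that $N\in FG$ is an alternative to reading this off directly from the fact that each coefficient $Tr((s\cdot a)a)$ lies in $F$.
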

\begin{proof}See \cite[Proposition 2.8]{T} for (a). As for (b), it follows directly from the simple calculation that
\[
\mathbf{r}_G(a)\mathbf{r}_G(b)^{[-1]}=\sum_{s\in G}Tr((s\cdot a)b)s^{-1}\in FG
\]
for all $a,b\in F_h$.
\end{proof}

\section{The Class of the Square Root of the Inverse Different}\label{s:5}

\subsection{Computation using Resolvends}\label{s:5.0}

Let $F$ be a number field. Moreover, assume that $G$ is \mbox{abelian and of odd} order. Below, we explain why $(A_h,Tr_h)$ is locally $G$-isometric to $(\mathcal{O}_FG,t_F)$ for $h\in\mbox{Hom}(\Omega_F,G)$ weakly ramified and how the class $\mbox{ucl}(A_h)$ it defines in $\mbox{UCl}(\mathcal{O}_FG)$ may be computed using resolvends.

Let $h\in\mbox{Hom}(\Omega_F,G)$ be weakly ramified. Recall that $A_h$ is locally free over $\mathcal{O}_FG$ by \cite[Theorem 1 in Section 2]{Erez} in this case, and that $\mathcal{O}_{F_v}\otimes_{\mathcal{O}_F}A_h\simeq A_{h_v}$ from Remark~\ref{localize}. Hence, for each $v\in M_F$, there exists $a_v\in A_{h_v}$ such that
\begin{equation}\label{av}
A_{h_v}=\mathcal{O}_{F_v}G\cdot a_v.
\end{equation}
Moreover, by the Normal Basis Theorem, there exists $b\in F_h$ such that
\begin{equation}\label{b}
F_h=FG\cdot b.
\end{equation}
Since $G$ has odd order, it follows from \cite[Proposition 5.1]{BayerLenstra} that $b\in F_h$ may be chosen to be self-dual. Notice that $F_vG\cdot a_v=F_{h_v}=F_vG\cdot b$ for all $v\in M_F$ and that $\mathcal{O}_{F_v}G\cdot a_v=\mathcal{O}_{F_v}G\cdot b$ for all but finitely may $v\in M_F$. This implies that there exists $c\in J(FG)$ such that
\begin{equation}\label{cv}
a_v=c_v\cdot b
\end{equation}
for $v\in M_F$. In particular, the isomorphism
\[
FG\longrightarrow F_h;\hspace{1em}\gamma\mapsto \gamma\cdot b
\]
of $FG$-modules restricts to an isomorphism $\varphi:\mathcal{O}_FG\cdot c\longrightarrow A_h$ of $\mathcal{O}_FG$-modules. Since $b$ is chosen to be self-dual, the map $\varphi$ is in fact a $G$-isometry over $\mathcal{O}_F$. Moreover, 
the lattice $\mathcal{O}_FG\cdot c$ is self-dual with respect to $t_F$ because $A_h$ is self-dual with respect to $Tr_h$. It then follows from Proposition~\ref{SDcriteria} (a) that $c\in J(FG_{(s)})$, and from Proposition~\ref{inFG} (a) that $(A_h,Tr_h)\in g(\mathcal{O}_FG)_s$. In particular, we have $\mbox{ucl}(A_h)=\mbox{ucl}((\mathcal{O}_FG\cdot c,t_F))=j_{(s)}(c)$. Recall also that the resolvend map $\mathbf{r}_G:\mbox{Map}(G,F_v^c)\longrightarrow F_v^cG$ is an isomorphism of $F_v^cG$-modules for each $v\in M_F$. Thus, equation (\ref{cv}) is equivalent to
\begin{equation}\label{resolvendeq}
\mathbf{r}_G(a_v)=c_v\cdot\mathbf{r}_G(b).
\end{equation}
With this observation, we are now ready to prove Theorem~\ref{thm:weaku}.

\subsection{Proof of Theorem~\ref{thm:weaku}}\label{s:5.1}

\begin{proof}[Proof of Theorem~\ref{thm:weaku}]To prove (a), let $h\in H^1_w(\Omega_K,G)$. The fact that $h^{-1}\in H^1_w(\Omega_K,G)$ is a direct consequence of \cite[Proposition 5.1 (a)]{T}.

Let $b\in K_h$ be as in (\ref{b}), where we choose $b$ to be self-dual. Moreover, for each $v\in M_K$, let $a_v\in A_{h_v}$ and $c_v\in (K_vG)^\times$ be as in (\ref{av}) and (\ref{cv}), respectively. We have $c:=(c_v)\in J(KG_{(s)})$ and $\mbox{ucl}(A_h)=j_{(s)}(c)$, as explained in Subsection~\ref{s:5.0}.

Now, note that $\mathbf{r}_G(b)\in (K^cG)^\times$ by Proposition~\ref{NBG} (a). Because $\mathbf{r}_G$ is bijective, there exists $b'\in\mbox{Map}(G,K^c)$ such that
\[
\mathbf{r}_G(b')=\mathbf{r}_G(b)^{-1}.
\]
By (\ref{resol1}) and Proposition~\ref{NBG} (a), in fact $b'\in K_{h^{-1}}$ and $K_{h^{-1}}=KG\cdot b'$. Moreover, by Proposition~\ref{SDcriteria'} (b), clearly $b'$ is self-dual with respect to $Tr_{h^{-1}}$. Next, for each $v\in M_K$, there exists $a_v'\in A_{h_v^{-1}}$ such that $A_{h_v^{-1}}=\mathcal{O}_{K_v}G\cdot a_v'$ and
\[
\mathbf{r}_G(a_v')=\mathbf{r}_G(a_v)^{-1}
\]
by \cite[Proposition 5.1 (b)]{T}. Then, for each $v\in M_K$, equation (\ref{resolvendeq}) implies that
\[
\mathbf{r}_G(a_v')=c_v^{-1}\cdot\mathbf{r}_G(b')
\]
and so $a_v'=c_v^{-1}\cdot b'$. As in Subsection~\ref{s:5.0}, this implies that $\mbox{ucl}(A_{h^{-1}})=j_{(s)}(c^{-1})=\mbox{ucl}(A_h)^{-1}$, which proves (a).

Next, to prove (b), let $h_1,h_2\in H^1_w(\Omega_K,G)$ be such that $d(h_1)\cap d(h_2)=\emptyset$. The fact that $h_1h_2\in H^1_w(\Omega_K,G)$ is a direct consequence of \cite[Proposition 5.3 (b)]{T}.

For $i\in\{1,2\}$, let $b_i\in K_{h_i}$ be as in (\ref{b}), where we choose $b_i$ to be self-dual. Moreover, for each $v\in M_K$, let $a_{i,v}\in A_{(h_i)_v}$ and $c_{i,v}\in (K_vG)^\times$ be as in (\ref{av}) and (\ref{cv}), respectively. As explained in Subsection~\ref{s:5.0}, we have $c_i:=(c_{i,v})\in J(KG_{(s)})$ and $j_{(s)}(c_i)=\mbox{ucl}(A_{h_i})$.

Now, because $\mathbf{r}_G$ is bijective, there exists $b\in\mbox{Map}(G,K^c)$ such that
\[
\mathbf{r}_G(b)=\mathbf{r}_G(b_1)\mathbf{r}_G(b_2).
\]
By (\ref{resol1}) and Proposition~\ref{NBG} (a), in fact $b\in K_{h_1h_2}$ and $K_{h_1h_2}=KG\cdot b$. Moreover, by Proposition~\ref{SDcriteria'} (b), clearly $b$ is self-dual with respect to $Tr_{h_1h_2}$. Observe that for each $v\in M_K$, either $(h_1)_v$ or $(h_2)_v$ is unramified because $d(h_1)\cap d(h_2)=\emptyset$. So, there exists $a_v\in A_{(h_1h_2)_v}$ such that $A_{(h_1h_2)_v}=\mathcal{O}_{K_v}G\cdot a_v$ and 
\[
\mathbf{r}_G(a_v)=\mathbf{r}_G(a_{1,v})\mathbf{r}_G(a_{2,v})
\]
by \cite[Proposition 5.3 (c)]{T}. Then, for each $v\in M_K$, equation (\ref{resolvendeq}) implies that
\[
\mathbf{r}_G(a_v)=c_{1,v}c_{2,v}\cdot\mathbf{r}_G(b)
\]
and so $a_v=c_{1,v}c_{2,v}\cdot b$. As in Subsection~\ref{s:5.0}, this implies that $\mbox{ucl}(A_{h_1h_2})=j_{(s)}(c_1c_2)=\mbox{ucl}(A_{h_1})\mbox{ucl}(A_{h_2})$, which proves (b).
\end{proof}

\subsection{Summary of Main Ideas}\label{s:5.25} Let $F$ be a number field. Moreover, assume that $G$ is abelian and of odd order. To prove Theorems~\ref{thm:subgroupu} and~\ref{thm:wildu}, we will compute and characterize the  $A$-realizable classes in $\mbox{UCl}(\mathcal{O}_FG)$. Since most of the work has already been done in \cite{T}, below we will only sketch the main ideas involved.

Let $h\in\mbox{Hom}(\Omega_F,G)$ be weakly ramified. As in Subsection~\ref{s:5.0}, let $b\in F_h$ be a self-dual element such that $F_h=FG\cdot b$. Moreover, for each $v\in M_F$, let $a_v\in A_{h_v}$ be such that $A_{h_v}=\mathcal{O}_{F_v}G\cdot a_v$ and let $c_v\in (F_vG)^\times$ be such that $a_v=c_v\cdot b$. Then, we have $c:=(c_v)\in J(FG_{(s)})$ and $j_{(s)}(c)=\mbox{ucl}(A_h)$. Recall also from (\ref{resolvendeq}) that the equation $a_v=c_v\cdot b$ is equivalent to
\begin{equation}\label{eq1}
\mathbf{r}_G(a_v)=c_v\cdot \mathbf{r}_G(b).
\end{equation}
The resolvend $\mathbf{r}_G(b)$ of a self-dual element $b\in F_h$ satisfying $F_h=FG\cdot b$ is already characterized by Propositions~\ref{NBG} (a) and~\ref{SDcriteria'} (b). Hence, in order to characterize the class $\mbox{ucl}(A_h)$, it suffices to characterize the resolvend $\mathbf{r}_G(a_v)$ of an element $a_v\in A_{h_v}$ satisfying $A_{h_v}=\mathcal{O}_{F_v}G\cdot a_v$ for each $v\in M_F$. In fact, we will use \emph{reduced resolvends}, which we define in Subsection~\ref{s:5.2}.

For each $v\in M_F$, the resolvend $\mathbf{r}_G(a_v)$ of an element $a_v\in A_{h_v}$ satisfying $A_{h_v}=\mathcal{O}_{F_v}G\cdot a_v$ may be computed as follows. First of all, by \cite[Proposition 9.2]{T}, we may write $h_v=h_{v,1}h_{v,2}$ for some $h_{v,1},h_{v,2}\in\mbox{Hom}(\Omega_{F_v},G)$ such that $h_{v,1}$ is unramified and $F_v^{h_{v,2}}/F_v$ is totally ramified. Then, using \cite[Proposition 5.3 (c)]{T}, we may decompose
\begin{equation}\label{eq2}
\mathbf{r}_G(a_v)=\mathbf{r}_G(a_{v,1})\mathbf{r}_G(a_{v,2}),
\end{equation}
where $\mathcal{O}_{h_{v,1}}=\mathcal{O}_{F_v}G\cdot a_{v,1}$ and $A_{h_{v,2}}=\mathcal{O}_{F_v}G\cdot a_{v,2}$. The resolvend $\mathbf{r}_G(a_{v,1})$ of such an element $a_{v,1}$ is already characterized by Proposition~\ref{NBG} (b). On the other hand, the resolvend $\mathbf{r}_G(a_{v,2})$ may be described using the \emph{modified Stickelberger transpose} (see \cite[Propositions 10.2 and 13.2]{T}), which we define in Subsection~\ref{s:5.3}. Using results already proved in \cite{T}, we will give a complete characterization of the set
\[
\mathcal{A}^t_u(\mathcal{O}_FG):=\{\mbox{ucl}(A_h):h\in H^1_t(\Omega_F,G)\}
\]
of tame $A$-realizable classes in $\mbox{UCl}(\mathcal{O}_FG)$ in (\ref{tamecharu}). 

\subsection{Cohomology and Reduced Resolvends}\label{s:5.2}Let $F$ be a number field or a finite extension of $\mathbb{Q}_p$. Moreover, assume that $G$ is abelian.

First of all, following \cite[Sections 1 and 2]{McCulloh}, we will use cohomology to define reduced resolvends. Recall that $\Omega_F$ acts trivially on $G$ and define
\[
\mathcal{H}(FG):=((F^cG)^\times/G)^{\Omega_F}.
\]
Taking $\Omega_F$-cohomology of the exact sequence
\begin{equation}\label{exact1}
\begin{tikzcd}[column sep=1cm, row sep=1.5cm]
1 \arrow{r} &
G \arrow{r} &
(F^{c}G)^{\times} \arrow{r} &
(F^{c}G)^{\times}/G \arrow{r}&
1
\end{tikzcd}
\end{equation}
yields the exact sequence
\[
\begin{tikzcd}[column sep=1cm, row sep=1.5cm]
1 \arrow{r} &
G \arrow{r} &
(FG)^{\times} \arrow{r} &
\mathcal{H}(FG) \arrow{r}{\delta}&
\mbox{Hom}(\Omega_F,G) \arrow{r}&
1,
\end{tikzcd}
\]
where exactness on the right follows from the fact that $H^1(\Omega_F,(F^cG)^\times)=1$, which is Hilbert's Theorem 90. Alternatively, notice that a coset $\mathbf{r}_G(a)G\in\mathcal{H}(FG)$ lies in the preimage of $h\in\mbox{Hom}(\Omega_F,G)$ if and only if
\[
h(\omega)=\mathbf{r}_G(a)^{-1}(\omega\cdot\mathbf{r}_G(a))\hspace{1cm}
\mbox{for all }\omega\in\Omega_F,
\]
which is equivalent to $F_h=FG\cdot a$ by (\ref{resol2}) and Proposition~\ref{NBG} (a). By the Normal Basis Theorem, for any $h\in\mbox{Hom}(\Omega_F,G)$ there always exists $a\in F_h$ for which $F_h=FG\cdot a$. This shows that $\delta$ is indeed surjective.

The same argument as above also shows that
\begin{equation}\label{global}
\mathcal{H}(FG)=\{r_G(a)\mid F_h=FG\cdot a\mbox{ for some }h\in\mbox{Hom}(\Omega_F,G)\}.
\end{equation}
Similarly, we may define
\[
\mathcal{H}(\mathcal{O}_FG):=((\mathcal{O}_{F^c}G)^\times/G)^{\Omega_F}.
\]
Then, the argument above together with Proposition~\ref{NBG} (b) imply that
\[
\mathcal{H}(\mathcal{O}_FG)=\{r_G(a)\mid \mathcal{O}_h=\mathcal{O}_FG\cdot a\mbox{ for some $h\in\mbox{Hom}(\Omega_F,G)$ unramified}\}.
\]
In view of Proposition~\ref{SDcriteria'}, we will further define
\begin{align*}
\mathcal{H}(FG_{(s)})&:=\{r_G(a)\in\mathcal{H}(FG)\mid \mathbf{r}_G(a)\mathbf{r}_G(a)^{[-1]}\in(\mathcal{O}_FG)^\times\};\\
\mathcal{H}(FG_{(1)})&:=\{r_G(a)\in\mathcal{H}(FG)\mid \mathbf{r}_G(a)\mathbf{r}_G(a)^{[-1]}=1\},
\end{align*}
which are clearly subgroups of $\mathcal{H}(FG)$. Moreover, it is clear that both of the conditions $\mathbf{r}_G(a)\mathbf{r}_G(a)^{[-1]}\in(\mathcal{O}_FG)^\times$ and $\mathbf{r}_G(a)\mathbf{r}_G(a)^{[-1]}=1$ are independent of  the choice of the representative $\mathbf{r}_G(a)$. 

\begin{definition}Let $\mathbf{r}_G(a)G\in\mathcal{H}(FG)$. Define
\[
r_G(a):=\mathbf{r}_G(a)G,
\]
called the \emph{reduced resolvend of $a$}. Moreover, define $h_a\in\mbox{Hom}(\Omega_F,G)$ by
\[
h_a(\omega):=\mathbf{r}_G(a)^{-1}(\omega\cdot\mathbf{r}_G(a)),
\]
called the \emph{homomorphism associated to $r_G(a)$}. This definition is independent of the choice of the representative $\mathbf{r}_G(a)$, and we have $F_h=FG\cdot a$ by Proposition~\ref{NBG} (a) and (\ref{resol2}).
\end{definition}

\begin{definition}\label{JH}
For $F$ a number field, let $J(\mathcal{H}(FG))$ and $J(\mathcal{H}(FG_{(s)}))$ be the restricted direct products of the groups $\mathcal{H}(F_vG)$ and $\mathcal{H}(F_vG_{(s)})$, respectively, with respect to the subgroups $\mathcal{H}(\mathcal{O}_{F_v}G)$ for $v\in M_F$. Moreover, let
\[
\eta:\mathcal{H}(FG)\longrightarrow J(\mathcal{H}(FG))
\]
be the diagonal map and let
\[
U(\mathcal{H}(\mathcal{O}_FG)):=\prod_{v\in M_F}\mathcal{H}(\mathcal{O}_{F_v}G)
\]
be the group of unit id\`{e}les.
\end{definition}

Next, we explain how reduced resolvends may be interpreted as functions on characters of $G$. To that end, define $\det:\mathbb{Z}\widehat{G}\longrightarrow\widehat{G}$ by
\[
\det\left(\sum_\chi n_\chi\chi\right):=\prod_\chi\chi^{n_\chi}
\]
and set
\[
S_{\widehat{G}}:=\ker(\det).
\]
Then, applying the functor $\mbox{Hom}(-,(F^c)^\times)$ to the short exact sequence
\[
\begin{tikzcd}[column sep=1.5cm, row sep=1.5cm]
1 \arrow{r} &
S_{\widehat{G}} \arrow{r} &
\mathbb{Z}\widehat{G}\arrow{r}[font=\large]{\det} &
\widehat{G} \arrow{r}&
1
\end{tikzcd}
\]
yields the short exact sequence
\begin{equation}\label{exact2}
\begin{tikzcd}[column sep=0.45cm, row sep=1.5cm]
1 \arrow{r} &
\mbox{Hom}(\widehat{G},(F^{c})^{\times}) \arrow{r} &
\mbox{Hom}(\mathbb{Z}\widehat{G},(F^{c})^{\times}) \arrow{r}&
\mbox{Hom}(S_{\widehat{G}},(F^{c})^{\times}) \arrow{r}&
1,
\end{tikzcd}
\end{equation}
where exactness on the right follows from the fact that $(F^c)^\times$ is divisible and thus injective.

Observe that we have canonical identifications
\[
(F^{c}G)^{\times}=\mbox{Map}(\widehat{G},(F^{c})^{\times})
=\mbox{Hom}(\mathbb{Z}\widehat{G},(F^c)^\times).
\]
The second identification is given by extending the maps $\widehat{G}\longrightarrow (F^c)^\times$ via $\mathbb{Z}$-linearity, and the first is induced by characters (see \cite[(7.7) and (7.8)]{T}, for example). Since $G=\mbox{Hom}(\widehat{G},(F^c)^\times)$ canonically, the thirds terms
\[
(F^cG)^\times/G=\mbox{Hom}(S_{\widehat{G}},(F^c)^\times)
\]
in (\ref{exact1}) and (\ref{exact2}), respectively, are naturally identified as well. Taking $\Omega_F$-invariants, we then obtain the identification
\begin{equation}\label{iden}
\mathcal{H}(FG)=\mbox{Hom}_{\Omega_F}(S_{\widehat{G}},(F^c)^\times).
\end{equation}
Under this identification, we have
\begin{equation}\label{integral}
\mathcal{H}(\mathcal{O}_FG)\subset\mbox{Hom}_{\Omega_F}(S_{\widehat{G}},\mathcal{O}_{F^c}^\times).
\end{equation}
This inclusion is an equality when $F$ is a finite extension of $\mathbb{Q}_p$, where $p$ does not divide $|G|$ (see \cite[Proposition 7.4]{T}, for example). 

Finally, we will define
\[
rag_F:(FG)^\times\longrightarrow\mathcal{H}(FG)
\]
to be the homomorphism induced by the quotient map $(FG)^\times\longrightarrow(FG)^\times/G$.

\begin{definition}\label{rag} For $F$ a number field, observe that the homomorphism
\begin{equation}\label{rag1}
\prod_{v\in M_F}rag_{F_v}:J(FG)\longrightarrow J(\mathcal{H}(FG))
\end{equation}
is clearly well-defined, and that the diagram
\[
\begin{tikzpicture}[baseline=(current bounding box.center)]
\node at (0,2.5) [name=13] {$(FG)^\times$};
\node at (5.5,2.5) [name=14] {$J(FG)$};
\node at (0,0) [name=23] {$\mathcal{H}(FG)$};
\node at (5.5,0) [name=24] {$J(\mathcal{H}(FG))$};
\path[->,font=\large]
(13) edge node[auto]{$\partial$} (14)
(23) edge node[below]{$\eta$} (24)
(14) edge node[right]{$\prod_v rag_{F_v}$} (24)
(13) edge node[left]{$rag_F$} (23);
\end{tikzpicture}
\]
commutes. By abuse of notation, we will denote the map in (\ref{rag1}) by $rag=rag_F$.
\end{definition}

\subsection{The Modified Stickelberger Transpose}\label{s:5.3}Let $F$ be a number \mbox{field or} a finite extension of $\mathbb{Q}_p$. Moreover, assume that $G$ is abelian and of odd order. 

We will recall the definition of the modified Stickelberger transpose, which was introduced by the author in \cite[Section 8]{T}. Recall that we chose a compatible set $\{\zeta_n:n\in\mathbb{Z}^+\}$ of primitive roots of unity in $F^c$.

\begin{definition}\label{Stickel}For each $\chi\in\widehat{G}$ and $s\in G$, let
\[
\upsilon(\chi,s)\in \left[\frac{1-|s|}{2},\frac{|s|-1}{2}\right]
\]
be the unique integer (recall that $G$ has odd order) such that
\[
\chi(s)=(\zeta_{|s|})^{\upsilon(\chi,s)}
\]
and define
\[
\langle\chi,s\rangle_{*}:=\upsilon(\chi,s)/|s|.
\]
Extending this definition by $\mathbb{Q}$-linearity, we obtain a pairing
\[
\langle\hspace{1mm},\hspace{1mm}\rangle_*:\mathbb{Q}\widehat{G}\times\mathbb{Q}G\longrightarrow\mathbb{Q},
\]
called the \emph{modified Stickelberger pairing}. The map
\[
\Theta_{*}:\mathbb{Q}\widehat{G}\longrightarrow\mathbb{Q}G;
\hspace{1em}
\Theta_{*}(\psi):=\sum_{s\in G}\langle\psi,s\rangle_{*}s
\]
is called the \emph{modified Stickelberger map}.
\end{definition}

\begin{prop}\label{A-ZG}For $\psi\in\mathbb{Z}\widehat{G}$, we have $\Theta_{*}(\psi)\in\mathbb{Z}G$ if and only if $\psi\in S_{\widehat{G}}$.
\end{prop}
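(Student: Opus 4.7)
The plan is to reduce the claim to a pointwise condition on the character $\det(\psi)\in\widehat{G}$ by exploiting the fact that the coefficient of $s$ in $\Theta_*(\psi)$ is essentially the exponent that produces $\det(\psi)(s)$ as a power of $\zeta_{|s|}$.

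First I would write $\psi=\sum_\chi n_\chi\chi$ with $n_\chi\in\mathbb{Z}$ and unpack the coefficient of $s\in G$ in $\Theta_*(\psi)$. By $\mathbb{Z}$-linearity of the modified Stickelberger pairing in the first variable,
\[
\langle\psi,s\rangle_* \;=\; \frac{1}{|s|}\sum_\chi n_\chi\,\upsilon(\chi,s).
\]
Therefore $\Theta_*(\psi)\in\mathbb{Z}G$ if and only if, for every $s\in G$, the integer $\sum_\chi n_\chi\,\upsilon(\chi,s)$ is divisible by $|s|$.

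Next I would translate this divisibility condition into a statement about $\det(\psi)$. Since $\chi(s)=\zeta_{|s|}^{\upsilon(\chi,s)}$ by the definition of $\upsilon(\chi,s)$, raising to $n_\chi$ and multiplying over $\chi$ gives
\[
\det(\psi)(s) \;=\; \prod_\chi\chi(s)^{n_\chi} \;=\; \zeta_{|s|}^{\sum_\chi n_\chi\upsilon(\chi,s)}.
\]
Because $\zeta_{|s|}$ has exact order $|s|$, the right-hand side equals $1$ precisely when $|s|$ divides $\sum_\chi n_\chi\upsilon(\chi,s)$. Hence $\det(\psi)(s)=1$ for all $s\in G$ — equivalently $\det(\psi)$ is the trivial character, i.e.\ $\psi\in S_{\widehat{G}}$ — if and only if every coefficient $\langle\psi,s\rangle_*$ lies in $\mathbb{Z}$, i.e.\ $\Theta_*(\psi)\in\mathbb{Z}G$.

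The argument is essentially a single computation, so I do not anticipate a serious obstacle; the only subtlety is making sure the pairing is well-defined on $\mathbb{Z}\widehat{G}$ (the integer $\upsilon(\chi,s)$ is well-defined because $|G|$ is odd, guaranteeing the symmetric interval contains exactly one representative modulo $|s|$) and invoking that $\zeta_{|s|}$ has order exactly $|s|$ to pass between additive congruences mod $|s|$ and multiplicative equalities among roots of unity. Both points are already built into Definition~\ref{Stickel}, so the proof is immediate once the coefficient of $s$ in $\Theta_*(\psi)$ is identified with the exponent of $\zeta_{|s|}$ presenting $\det(\psi)(s)$.
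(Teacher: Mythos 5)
Your argument is correct: identifying $|s|\langle\psi,s\rangle_*=\sum_\chi n_\chi\upsilon(\chi,s)$ with the exponent presenting $\det(\psi)(s)$ as a power of the primitive root $\zeta_{|s|}$ gives exactly the equivalence between integrality of all coefficients of $\Theta_*(\psi)$ and triviality of $\det(\psi)$. The paper itself only cites \cite[Proposition 8.2]{T} here, and the proof given there is this same computation, so your approach matches the source.
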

\begin{proof}See \cite[Proposition 8.2]{T}.
\end{proof}

Up until now, we have let $\Omega_F$ act trivially on $G$. Below, we introduce other $\Omega_F$-actions on $G$, one of which will make the $\mathbb{Q}$-linear map $\Theta_*:\mathbb{Q}\widehat{G}\longrightarrow\mathbb{Q}G$ preserve $\Omega_F$-action. Here, the $\Omega_F$-action on $\widehat{G}$ is the canonical one \mbox{induced by} the $\Omega_F$-action on the roots of unity.

\begin{definition}\label{cyclotomic}
Let $m=\exp(G)$ and let $\mu_m$ be the group of $m$-th roots of unity in $F^c$. The \emph{$m$-th cyclotomic character of $\Omega_F$} is the homomorphism
\[
\kappa:\Omega_F\longrightarrow(\mathbb{Z}/m\mathbb{Z})^{\times}
\]
defined by the equations
\[
\omega(\zeta)=\zeta^{\kappa(\omega)}\hspace{1cm}\mbox{for $\omega\in\Omega_F$ and }\zeta\in\mu_m.
\]
For $n\in\mathbb{Z}$, let $G(n)$ be the group $G$ equipped with the $\Omega_F$-action given by
\[
\omega\cdot s:=s^{\kappa(\omega^{n})}\hspace{1cm}\mbox{for $s\in G$ and $\omega\in\Omega_F$}.
\]
\end{definition}

We will need $G(-1)$. But of course, if $F$ contains the $m$-th roots of unity, then $\kappa$ is trivial and $G(n)=G(0)$ is equipped with the trivial $\Omega_F$-action for all $n\in\mathbb{Z}$.

\begin{prop}\label{eqvariant}
The map $\Theta_{*}:\mathbb{Q}\widehat{G}\longrightarrow\mathbb{Q}G(-1)$ preserves $\Omega_F$-action.
\end{prop}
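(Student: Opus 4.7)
The plan is to reduce to a pointwise identity on characters and then verify it via a direct calculation with the cyclotomic character. By the $\mathbb{Q}$-linearity of both $\Theta_*$ and of the $\Omega_F$-actions, it suffices to prove that $\Theta_*(\omega\cdot\chi)=\omega\cdot\Theta_*(\chi)$ for every $\omega\in\Omega_F$ and every $\chi\in\widehat{G}$.

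Expanding the right-hand side using the $G(-1)$-action gives
\[
\omega\cdot\Theta_*(\chi)=\sum_{s\in G}\langle\chi,s\rangle_*\, s^{\kappa(\omega^{-1})}.
\]
Since $\kappa(\omega)\in(\mathbb{Z}/m\mathbb{Z})^\times$, the map $s\mapsto s^{\kappa(\omega^{-1})}$ is a bijection of $G$; reindexing by $t=s^{\kappa(\omega^{-1})}$ (so that $s=t^{\kappa(\omega)}$) converts the sum to $\sum_{t\in G}\langle\chi,t^{\kappa(\omega)}\rangle_*\,t$. Meanwhile the left-hand side is, by definition, $\sum_{t\in G}\langle\omega\cdot\chi,t\rangle_*\,t$. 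Thus the proposition reduces to the identity
\[
\langle\omega\cdot\chi,t\rangle_*=\langle\chi,t^{\kappa(\omega)}\rangle_*\qquad\text{for all }t\in G.
\]

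To establish this, I would unpack the definition of $\upsilon$ on both sides. Because $\kappa(\omega)$ is coprime to $m$, it is coprime to $|t|$, so the order $|t^{\kappa(\omega)}|$ equals $|t|$; hence both sides have the same denominator $|t|$. Using the canonical Galois action on characters and the definition of $\kappa$,
\[
(\omega\cdot\chi)(t)=\omega\!\left(\chi(t)\right)=\omega\!\left(\zeta_{|t|}^{\upsilon(\chi,t)}\right)=\zeta_{|t|}^{\kappa(\omega)\upsilon(\chi,t)},
\]
while on the other side $\chi(t^{\kappa(\omega)})=\chi(t)^{\kappa(\omega)}=\zeta_{|t|}^{\kappa(\omega)\upsilon(\chi,t)}$ as well. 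Consequently both $\upsilon(\omega\cdot\chi,t)$ and $\upsilon(\chi,t^{\kappa(\omega)})$ are integers in the symmetric window $\bigl[\tfrac{1-|t|}{2},\tfrac{|t|-1}{2}\bigr]$ congruent to $\kappa(\omega)\upsilon(\chi,t)\pmod{|t|}$. Since $|t|$ is odd, this window contains exactly one residue class representative, so the two integers coincide, proving the identity.

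The only real subtlety is bookkeeping: one must choose the twist $G(-1)$ rather than $G(+1)$ precisely so that after the reindexing $t=s^{\kappa(\omega^{-1})}$ the exponent on $t$ in the argument of $\chi$ comes out to $\kappa(\omega)$, which is the same exponent that emerges from pushing $\omega$ through $\chi(t)$ via the cyclotomic character. Once this sign (i.e., direction of the twist) is fixed correctly, the remainder of the argument is a uniqueness statement for the symmetric residue representative modulo an odd integer, and there is no genuine obstacle.
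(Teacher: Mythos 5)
Your proof is correct and is the standard direct verification: reduce by linearity to the identity $\langle\omega\cdot\chi,t\rangle_*=\langle\chi,t^{\kappa(\omega)}\rangle_*$, and deduce it from $\kappa(\omega^{-1})=\kappa(\omega)^{-1}$ together with the uniqueness of the symmetric residue representative modulo the odd integer $|t|$. The paper itself gives no argument here---it simply cites \cite[Proposition 8.4]{T}---and your computation is precisely the one that reference carries out, so there is nothing to add.
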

\begin{proof}See \cite[Proposition 8.4]{T}.
\end{proof}

From Propositions~\ref{A-ZG} and \ref{eqvariant}, we obtain an $\Omega_F$-equivariant map
\[
\Theta_{*}:S_{\widehat{G}}\longrightarrow\mathbb{Z}G(-1).
\]
Applying the functor $\mbox{Hom}(-,(F^c)^\times)$ then yields an $\Omega_F$-equivariant homomorphism
\[
\Theta_{*}^{t}:\mbox{Hom}(\mathbb{Z}G(-1),(F^{c})^{\times})\longrightarrow\mbox{Hom}(S_{\widehat{G}},(F^{c})^{\times});\hspace{1em}f\mapsto f\circ\Theta_*.
\]
Via restriction, we then obtain a homomorphism
\[
\Theta^t_{*}=\Theta^t_{*,F}:\mbox{Hom}_{\Omega_F}(\mathbb{Z}G(-1),(F^{c})^{\times})\longrightarrow\mbox{Hom}_{\Omega_F}(S_{\widehat{G}},(F^{c})^{\times}),
\]
called the \emph{modified Stickelberger transpose}. 

Notice that we have a natural identification
\[
\mbox{Hom}_{\Omega_F}(\mathbb{Z}G(-1),(F^c)^\times)
=\mbox{Map}_{\Omega_F}(G(-1),(F^c)^\times).
\]
To simplify notation, let
\begin{align*}
\Lambda(FG)&:=\mbox{Map}_{\Omega_F}(G(-1),F^c);\\
\Lambda(\mathcal{O}_FG)&:=\mbox{Map}_{\Omega_F}(G(-1),\mathcal{O}_{F^c}).
\end{align*}
Then, we may view $\Theta_{*}^{t}$ as a homomorphism
\[
\Theta^t_*:\Lambda(FG)^{\times}\longrightarrow\mathcal{H}(FG)
\]
(recall the identification in (\ref{iden})).

\begin{prop}\label{ImTheta}We have $\Theta^t_*(\Lambda(FG)^\times)\subset\mathcal{H}(FG_{(1)})$.
\end{prop}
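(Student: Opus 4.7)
The plan is to unwind the identification $\mathcal{H}(FG)=\mbox{Hom}_{\Omega_F}(S_{\widehat{G}},(F^c)^\times)$ from (\ref{iden}), lift an arbitrary element of $\Theta^t_*(\Lambda(FG)^\times)$ back to a representative $c\in (F^cG)^\times$, and verify directly that $cc^{[-1]}=1$. Since the latter condition is independent of the choice of representative, this places $\Theta^t_*(g)\in\mathcal{H}(FG_{(1)})$ for every $g\in\Lambda(FG)^\times$. The structural observation driving the argument is that, under the identification $(F^cG)^\times=\mbox{Hom}(\mathbb{Z}\widehat{G},(F^c)^\times)$ given by $c\leftrightarrow(\chi\mapsto\chi(c))$, the involution $[-1]$ on $F^cG$ corresponds to precomposition with inversion on $\widehat{G}$: writing $c=\sum_s c_s s$ gives $\chi(c^{[-1]})=\sum_s c_s\chi(s^{-1})=\chi^{-1}(c)$ for every $\chi\in\widehat{G}$.

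The second ingredient is the identity $\Theta_*(\chi^{-1})=-\Theta_*(\chi)$ in $\mathbb{Q}G(-1)$ for every $\chi\in\widehat{G}$. Indeed, since $\chi^{-1}(s)=\zeta_{|s|}^{-\upsilon(\chi,s)}$ and the interval $\bigl[(1-|s|)/2,(|s|-1)/2\bigr]$ is symmetric about $0$ because $|s|$ is odd, Definition~\ref{Stickel} gives $\upsilon(\chi^{-1},s)=-\upsilon(\chi,s)$, hence $\langle\chi^{-1},s\rangle_*=-\langle\chi,s\rangle_*$, and summing over $s\in G$ yields the claim.

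With these tools in hand, let $g\in\Lambda(FG)^\times$, viewed after $\mathbb{Z}$-linear extension as a group homomorphism $\mathbb{Z}G(-1)\to(F^c)^\times$, and choose any lift $c\in(F^cG)^\times=\mbox{Hom}(\mathbb{Z}\widehat{G},(F^c)^\times)$ of $\Theta^t_*(g)=g\circ\Theta_*$; such a lift exists by the surjectivity in (\ref{exact2}). For each $\chi\in\widehat{G}$, note that $\chi+\chi^{-1}\in S_{\widehat{G}}$ because $\det(\chi+\chi^{-1})=\chi\chi^{-1}=1$, and compute
\[
\chi(c)\,\chi(c^{[-1]}) \;=\; \chi(c)\,\chi^{-1}(c) \;=\; (\chi+\chi^{-1})(c) \;=\; g\bigl(\Theta_*(\chi)+\Theta_*(\chi^{-1})\bigr) \;=\; g(0) \;=\; 1,
\]
where the penultimate equality uses $\Theta_*(\chi^{-1})=-\Theta_*(\chi)$, and the final one uses that $g$ is a homomorphism. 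Since characters separate points of $(F^cG)^\times$, this forces $cc^{[-1]}=1$, as desired.

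The only real subtlety is the sign identity $\Theta_*(\chi^{-1})=-\Theta_*(\chi)$; this is precisely where the odd-order hypothesis on $G$ enters essentially, for if some $|s|$ were even then $-\upsilon(\chi,s)$ could fall outside the prescribed symmetric interval and would need adjustment modulo $|s|$, breaking the identity and with it the final computation.
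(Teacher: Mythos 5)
Your proof is correct. The paper itself gives no argument here --- it simply cites \cite[Proposition 8.5]{T} --- and your computation is the natural direct verification one would expect to find there: the two key identities, namely that $[-1]$ corresponds to $\chi\mapsto\chi^{-1}$ under $(F^cG)^\times=\mathrm{Hom}(\mathbb{Z}\widehat{G},(F^c)^\times)$ and that $\Theta_*(\chi^{-1})=-\Theta_*(\chi)$ because the interval $\bigl[(1-|s|)/2,(|s|-1)/2\bigr]$ is a symmetric complete residue system mod $|s|$ for $|s|$ odd, are exactly the right ingredients, and your evaluation of the lift on $\chi+\chi^{-1}\in S_{\widehat{G}}$ together with the representative-independence of the condition $cc^{[-1]}=1$ closes the argument cleanly. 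You also correctly flag that the odd-order hypothesis is what makes the sign identity (and hence the whole statement) work.
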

\begin{proof}See \cite[Proposition 8.5]{T}.
\end{proof}

\begin{definition}\label{Theta}
For $F$ a number field, let $J(\Lambda(FG))$ be the restricted direct product of the groups $\Lambda(F_vG)^\times$ with respect to the subgroups $\Lambda(\mathcal{O}_{F_v}G)^\times$ for $v\in M_F$. Moreover, let
\[
\lambda:\Lambda(FG)^\times\longrightarrow J(\Lambda(FG))
\]
be the diagonal map and let
\[
U(\Lambda(\mathcal{O}_FG)):=\prod_{v\in M_F}\Lambda(\mathcal{O}_{F_v}G)^\times
\]
be the group of unit id\`{e}les.

Next, observe that the homomorphism
\begin{equation}\label{Theta1}
\prod_{v\in M_F}\Theta^t_{*,F_v}:J(\Lambda(FG))\longrightarrow J(\mathcal{H}(FG))
\end{equation}
is well-defined since the inclusion (\ref{integral}) is an equality for all but finitely many $v\in M_F$. Because we chose $\{i_v(\zeta_n):n\in\mathbb{Z}^+\}$ to be the compatible set of primitive roots of unity in $F_v^c$, the diagram
\begin{equation}\label{Theta'}
\begin{tikzpicture}[baseline=(current bounding box.center)]
\node at (0,2.5) [name=13] {$\Lambda(FG)^\times$};
\node at (5.5,2.5) [name=14] {$J(\Lambda(FG))$};
\node at (0,0) [name=23] {$\mathcal{H}(FG)$};
\node at (5.5,0) [name=24] {$J(\mathcal{H}(FG))$};
\path[->, font=\large]
(13) edge node[auto]{$\lambda$} (14)
(23) edge node[below]{$\eta$} (24)
(14) edge node[right]{$\prod_v\Theta^t_{*,F_v}$} (24)
(13) edge node[left]{$\Theta^t_{*,F}$} (23);
\end{tikzpicture}
\end{equation}
commutes. By abuse of notation, we will denote the map in (\ref{Theta1}) by $\Theta^t_*=\Theta^t_{*,F}$.
\end{definition}

\subsection{Approximation Theorems} \label{s:5.4} Let $F$ be a number field. Moreover, assume that $G$ is abelian and of odd order.

First, we will give a preliminary characterization of the set $\mathcal{A}^t_u(\mathcal{O}_FG)$ using results already proved in \cite{T}. To that end, we need one further definition (see \cite[Section 10]{T} for the motivation of the definition).

\begin{definition}\label{primeF}For each $v\in M_F$, choose a uniformizer $\pi_{F_v}$ in $F_v$ and let $q_{F_v}$ be the order of the residue field $\mathcal{O}_{F_v}/(\pi_{F_v})$. For each $s\in G$ of order dividing $q_{F_v}-1$, define
\[
f_{F_v,s}\in\Lambda(F_vG)^\times;
\hspace{1em}f_{F_v,s}(t):=\begin{cases}
\pi_{F_v} & \mbox{if }t=s\neq1\\
1 & \mbox{otherwise}.
\end{cases}
\]
Notice that $f_{F_v,s}$ indeed preserves $\Omega_{F_v}$-action because all $(q_{F_v}-1)$-st roots of unity are contained in $F_v$, whence elements in $G$ of order dividing $q_{F_v}-1$ are fixed by $\Omega_{F_v}$, as is $\pi_{F_v}$. Let $\mathfrak{F}_{F_v}$ be the set of all such $f_{F_v,s}$ and define
\[
\mathfrak{F}=\mathfrak{F}_F:=\{f\in J(\Lambda(FG))\mid f_v\in\mathfrak{F}_{F_v}\mbox{ for all }v\in M_F\}.
\]
\end{definition}

Analogous to \cite[Theorem 11.2]{T}, we have the following theorem.

\begin{thm}\label{char1}Let $h\in\mbox{Hom}(\Omega_F,G)$ and $F_h=FG\cdot b$ with $b$ self-dual. Then, we have $h$ is tame if and only if there exists $c\in J(FG_{(s)})$ such that
\begin{equation}\label{char1'}
rag(c)=\eta(r_G(b))^{-1}u\Theta^t_*(f)
\end{equation}
for some $u\in U(\mathcal{H}(\mathcal{O}_FG))$ and $f\in\mathfrak{F}$. Moreover, if (\ref{char1'}) holds, then
\begin{enumerate}[(1)]
\item $s_v\in h_v(\Omega_{F_v})$, where $f_v=f_{F_v,s_v}$, for all $v\in M_F$;
\item $f_v=1$ if and only if $h_v$ is unramified for all $v\in M_F$;
\item $j_{(s)}(c)=\mbox{ucl}(A_h)$.
\end{enumerate}
\end{thm}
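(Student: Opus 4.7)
The plan is to argue place-by-place using the local factorization $h_v = h_{v,1}h_{v,2}$ into an unramified piece and a totally (tamely) ramified piece guaranteed by \cite[Proposition 9.2]{T} when $h_v$ is tame, and then assemble the global statement via the commutative diagrams in Definitions~\ref{rag} and~\ref{Theta} identifying the idelic and principal parts of $J(\mathcal{H}(FG))$. The strategy is parallel to \cite[Theorem 11.2]{T}, the only new feature being that we must keep track of the self-dual structure throughout.

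For the forward implication, suppose $h$ is tame. For each $v \in M_F$, decompose $h_v = h_{v,1}h_{v,2}$ and invoke Proposition~\ref{NBG}(b) together with \cite[Propositions 10.2 and 13.2]{T} to produce elements $a_{v,1}$ with $\mathcal{O}_{h_{v,1}} = \mathcal{O}_{F_v}G \cdot a_{v,1}$ and $a_{v,2}$ with $A_{h_{v,2}} = \mathcal{O}_{F_v}G \cdot a_{v,2}$, whose reduced resolvends satisfy $u_v := r_G(a_{v,1}) \in \mathcal{H}(\mathcal{O}_{F_v}G)$ and $r_G(a_{v,2}) = \Theta^t_{*,F_v}(f_{F_v,s_v})$ for a generator $s_v$ of $h_{v,2}(\Omega_{F_v}) \subset h_v(\Omega_{F_v})$. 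Since one of the two pieces is unramified, \cite[Proposition 5.3(c)]{T} gives an element $a_v$ with $A_{h_v} = \mathcal{O}_{F_v}G \cdot a_v$ and $\mathbf{r}_G(a_v) = \mathbf{r}_G(a_{v,1})\mathbf{r}_G(a_{v,2})$. Defining $c_v \in (F_vG)^\times$ by $a_v = c_v \cdot b$, equation (\ref{resolvendeq}) and the Subsection~\ref{s:5.0} discussion yield $c = (c_v) \in J(FG_{(s)})$ with $j_{(s)}(c) = \mathrm{ucl}(A_h)$; taking reduced resolvends and assembling over $v$ via the diagram in Definition~\ref{Theta} produces precisely (\ref{char1'}).

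For the reverse implication, suppose (\ref{char1'}) holds. Localizing and using $\mathbf{r}_G(a_v) = c_v \mathbf{r}_G(b)$ for $a_v := c_v \cdot b$ gives $r_G(a_v) = u_v \Theta^t_{*,F_v}(f_v)$ in $\mathcal{H}(F_vG)$. By the description of $\mathcal{H}(\mathcal{O}_{F_v}G)$ in Subsection~\ref{s:5.2}, the factor $u_v$ is the reduced resolvend of an $\mathcal{O}_{F_v}G$-generator of $\mathcal{O}_{h_v'}$ for some unramified $h_v'$, while by \cite[Propositions 10.2 and 13.2]{T}, $\Theta^t_{*,F_v}(f_v)$ is the reduced resolvend of an $\mathcal{O}_{F_v}G$-generator of $A_{h_v''}$ for a totally tamely ramified $h_v''$. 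Applying \cite[Proposition 5.3(c)]{T} (using that $h_v'$ is unramified) identifies $a_v$ as an $\mathcal{O}_{F_v}G$-generator of $A_{h_v}$ for $h_v = h_v'h_v''$, which is in particular tame. Conclusion (1) is immediate from the construction of $f_v = f_{F_v,s_v}$ and the fact that $s_v$ generates $h_v''(\Omega_{F_v}) \subset h_v(\Omega_{F_v})$; (2) follows since $f_v = 1$ forces $h_v''$ trivial, hence $h_v$ unramified, and conversely; and (3) is the computation of Subsection~\ref{s:5.0}.

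The main obstacle is the self-dual bookkeeping: one must verify that the $c \in J(FG)$ produced in the forward direction in fact lies in $J(FG_{(s)})$, i.e., $c_vc_v^{[-1]} \in (\mathcal{O}_{F_v}G)^\times$ for all $v$. Here Proposition~\ref{SDcriteria'}(a) applied to $a_v$ gives $\mathbf{r}_G(a_v)\mathbf{r}_G(a_v)^{[-1]} \in (\mathcal{O}_{F_v}G)^\times$, which together with the self-duality of $b$ (so that $\mathbf{r}_G(b)\mathbf{r}_G(b)^{[-1]} = 1$ by Proposition~\ref{SDcriteria'}(b)) and the identity $\mathbf{r}_G(a_v) = c_v\mathbf{r}_G(b)$ transfers self-duality to $\mathcal{O}_{F_v}G \cdot c_v$. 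This is the single point at which the unitary refinement requires input beyond the locally-free argument of \cite[Theorem 11.2]{T}.
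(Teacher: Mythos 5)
Your proposal is correct, and the one genuinely new ingredient of the unitary setting --- transferring self-duality from $A_{h_v}$ and from $b$ to the idele $c$ via Proposition~\ref{SDcriteria'} and the identity $\mathbf{r}_G(a_v)=c_v\mathbf{r}_G(b)$ --- is exactly the point the paper's proof also isolates. The difference is one of granularity rather than of method: the paper invokes \cite[Theorem 11.2]{T} as a black box for both directions (obtaining the existence of $c\in J(FG)$ satisfying (\ref{char1'}) in the forward direction, and tameness together with conclusions (1) and (2) in the reverse direction), and then only supplies the self-dual bookkeeping plus the computation of $\mbox{ucl}(A_h)$ via \cite[Theorem 10.4]{T}; you instead inline the proof of that theorem, re-deriving everything from the local decomposition $h_v=h_{v,1}h_{v,2}$ of \cite[Proposition 9.2]{T} and the local resolvend results \cite[Propositions 5.3(c), 10.2, Theorem 10.4]{T}. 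Your route is more self-contained but proves nothing the citation does not already give; the paper's is shorter. One small presentational remark: in establishing (3), the paper works with an $a_v$ obtained from \cite[Theorem 10.4]{T} and must insert a factor $t_v\in G$ (since reduced resolvends only determine $\mathbf{r}_G(a_v)$ up to $G$), absorbing $t=(t_v)$ into $U(\mathcal{O}_FG)$; your choice of defining $a_v:=c_v\cdot b$ outright sidesteps this, at the cost of having to verify that this $a_v$ really generates $A_{h_v}$, which your appeal to the membership $r_G(a_v)\in\mathcal{H}(\mathcal{O}_{F_v}G)\Theta^t_{*,F_v}(\mathfrak{F}_{F_v})$ does supply.
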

\begin{proof}First, assume that $h$ is tame. Then, by \cite[Theorem 11.2]{T}, there exists $c\in J(FG)$ such that (\ref{char1'}) holds for some $u\in U(\mathcal{H}(\mathcal{O}_FG))$ and $f\in\mathfrak{F}$. It is clear that $U(\mathcal{H}(\mathcal{O}_FG))\subset J(\mathcal{H}(FG_{(s)})$, and we have $\Theta^t_*(\mathfrak{F})\subset J(\mathcal{H}(FG_{(s)})$ by Proposition~\ref{ImTheta}. Since $b$ is self-dual, it follows from Proposition~\ref{SDcriteria'} that in fact $c\in J(FG_{(s)})$, which proves the claim.

Conversely, assume that there exists $c\in J(FG_{(s)})$ such that  (\ref{char1'}) holds for some $u\in U(\mathcal{H}(\mathcal{O}_FG))$ and $f\in\mathfrak{F}$. By \cite[Theorem 11.2]{T}, this implies that $h$ is tame, and that (1) and (2) hold. To show that (3) holds as well, notice that for each $v\in M_F$, there exists $a_v\in A_{h_v}$ such that $A_{h_v}=\mathcal{O}_{F_v}G\cdot a_v$ and
\[
r_G(a_v)=u_v\Theta^t_*(f_{v})
\]
by \cite[Theorem 10.4]{T}. In particular, we have $r_G(a_v)=rag(c_v)r_G(b)$, so there exists $t_v\in G$ such that
\[
\mathbf{r}_G(a_v)=c_v\mathbf{r}_G(b)t_v=c_vt_v\cdot\mathbf{r}_G(b).
\]
This implies that $a_v=(c_vt_v)\cdot b$. Now, set $t:=(t_v)\in U(\mathcal{O}_FG)$. Since $b$ is self-dual, as in Subsection~\ref{s:5.0}, we deduce that $\mbox{ucl}(A_h)=j_{(s)}(ct)=j_{(s)}(c)$. This proves (3) and completes the proof of the theorem.
\end{proof}

\begin{remark}The decomposition of $rag(c)$ given by (\ref{char1'}) in Theorem~\ref{char1} comes from equation (\ref{eq1}) and  decomposition (\ref{eq2}) in Subsection~\ref{s:5.25}.
\end{remark}

Theorem~\ref{char1} implies that for $c\in J(FG_{(s)})$, we have $j_{(s)}(c)\in\mathcal{A}^t_u(\mathcal{O}_FG)$ if and only if $rag(c)$ is an element of 
\begin{equation}\label{prechar}
\eta(\mathcal{H}(FG_{(1)}))U(\mathcal{H}(\mathcal{O}_FG))\Theta^t_*(\mathfrak{F})
\end{equation}
(recall Proposition~\ref{SDcriteria} (b)). However, it is unclear whether (\ref{prechar}) is a subgroup of $J(\mathcal{H}(FG))$ because $\mathfrak{F}$ is only a subset of $J(\Lambda(FG))$. Below, we state two approximation theorems. They will allow us to replace $\mathfrak{F}$ by $J(\Lambda(FG))$ in (\ref{prechar}), which will in turn allow us to prove Theorems~\ref{thm:subgroupu} and~\ref{thm:wildu}.

First, we need some further definitions.

\begin{definition}
Let $\mathfrak{m}$ be an ideal in $\mathcal{O}_F$. For each $v\in M_F$, let
\begin{align*}
U_{\mathfrak{m}}(\mathcal{O}_{F_v^c})&:=(1+\mathfrak{m}\mathcal{O}_{F_v^c})\cap(\mathcal{O}_{F_v^c})^{\times};\\
U'_{\mathfrak{m}}(\Lambda(\mathcal{O}_{F_v}G))&:=\{g_v\in\Lambda(\mathcal{O}_{F_v}G)^\times\mid g_v(s)\in U_\mathfrak{m}(\mathcal{O}_{F_v^c})\mbox{ for all }s\in G\mbox{ with }s\neq 1\},
\end{align*}
Moreover, set
\[
U'_\mathfrak{m}(\Lambda(\mathcal{O}_FG))
:=\Bigg(\prod_{v\in M_F}U_\mathfrak{m}'(\Lambda(\mathcal{O}_{F_v}G))\Bigg)\cap J(\Lambda(FG)).
\]
The \emph{modified ray class group mod $\mathfrak{m}$ of $\Lambda(FG)$} is defined by
\[
\mbox{Cl}'_{\mathfrak{m}}(\Lambda(FG)):=\frac{J(\Lambda(FG))}{\lambda(\Lambda(FG)^{\times})U'_{\mathfrak{m}}(\Lambda(\mathcal{O}_FG))}.
\]
\end{definition}

\begin{definition}\label{gs}
For $g\in J(\Lambda(FG))$ and $s\in G$, define
\[
g_s:=\prod_{v\in M_F}g_v(s)\in\prod_{v\in M_F}(F_v^c)^\times.
\]
\end{definition}

We can now state the approximation theorems.

\begin{thm}\label{approx1}Let $\mathfrak{m}$ be an ideal in $\mathcal{O}_F$ divisible by both $|G|$ and $\exp(G)^2$. Then, we have
\[
\Theta_*^t(U_\mathfrak{m}'(\Lambda(\mathcal{O}_FG))\subset U(\mathcal{H}(\mathcal{O}_FG)).
\]
\end{thm}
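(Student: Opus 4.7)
The plan is to reduce to a local verification: for each $v \in M_F$ and each $g_v \in U'_\mathfrak{m}(\Lambda(\mathcal{O}_{F_v}G))$, it suffices to show $\Theta^t_{*,F_v}(g_v) \in \mathcal{H}(\mathcal{O}_{F_v}G)$, which then gives the global inclusion via the commuting square (\ref{Theta'}). I would split into two cases according to whether $v$ divides $|G|$.

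At the easy places $v \nmid |G|$ the inclusion (\ref{integral}) is an equality (as noted in Subsection~\ref{s:5.2}), so it is enough to check that $\Theta^t_*(g_v)(\psi) = \prod_{s} g_v(s)^{\langle\psi,s\rangle_*}$ lies in $\mathcal{O}_{F_v^c}^\times$ for each $\psi \in S_{\widehat{G}}$. By Proposition~\ref{A-ZG} the exponents $\langle\psi,s\rangle_*$ are integers, so this is immediate since each $g_v(s) \in \mathcal{O}_{F_v^c}^\times$.

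The substantive content lies at the wild places $v \mid |G|$, where the inclusion in (\ref{integral}) can be strict and a genuine lift of $\Theta^t_*(g_v) \in \mbox{Hom}_{\Omega_{F_v}}(S_{\widehat{G}}, \mathcal{O}_{F_v^c}^\times)$ to $\mathcal{H}(\mathcal{O}_{F_v}G) = ((\mathcal{O}_{F_v^c}G)^\times/G)^{\Omega_{F_v}}$ must be produced. My approach is to construct such a lift explicitly by $p$-adic analytic means. Since $g_v(s) \in 1 + \mathfrak{m}\mathcal{O}_{F_v^c}$ for $s \neq 1$ and $\exp(G)^2 \mid \mathfrak{m}$, the $p$-adic logarithm $\log g_v(s)$ converges and the fractional powers $g_v(s)^{\upsilon(\chi,s)/|s|}$, defined through the $p$-adic exponential, also converge. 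I would set
\[
\widetilde{g}_v(\chi) := \exp\!\Bigl(\sum_{s \neq 1} \langle\chi,s\rangle_* \log g_v(s)\Bigr), \qquad \chi \in \widehat{G},
\]
giving a map $\widehat{G} \to \mathcal{O}_{F_v^c}^\times$ and hence, via the identification $(F_v^cG)^\times = \mbox{Map}(\widehat{G}, (F_v^c)^\times)$, a candidate lift $\alpha_v \in F_v^cG$. Its restriction to $S_{\widehat{G}}$ recovers $\Theta^t_*(g_v)$ because the rational exponents become integers there and $\exp$ undoes $\log$; $\Omega_{F_v}$-equivariance of $\alpha_v$ modulo $G$ follows from the $\Omega_{F_v}$-equivariance of $\log g_v : G(-1) \to F_v^c$ together with Proposition~\ref{eqvariant}.

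The principal obstacle is the integrality of $\alpha_v$, namely to show $\alpha_v \in (\mathcal{O}_{F_v^c}G)^\times$ despite the factor $|G|^{-1}$ appearing in the idempotent formula $e_\chi = |G|^{-1}\sum_t \chi(t^{-1}) t$. Writing $\widetilde{g}_v(\chi) = 1 + \varepsilon_\chi$ with $\varepsilon_\chi$ bounded below in $p$-adic valuation by the analytic estimates granted by $\exp(G)^2 \mid \mathfrak{m}$, the character-orthogonality relation $\sum_\chi \chi(t^{-1}) = |G|\cdot \mathbf{1}_{t=1}$ turns the leading term of $\sum_\chi \widetilde{g}_v(\chi)\chi(t^{-1})$ into something with $|G|$ as a factor, while the hypothesis $|G| \mid \mathfrak{m}$ ensures that the error terms are also divisible by $|G|$, so dividing by $|G|$ still yields an element of $\mathcal{O}_{F_v^c}$. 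This is the quantitative heart of the theorem, and it is exactly the joint role played by $\mathfrak{m}$ being divisible by \emph{both} $|G|$ and $\exp(G)^2$: the latter ensures convergence of the power series defining $\widetilde{g}_v$, the former ensures integrality of the resulting $\alpha_v$. The same reasoning applied to $\widetilde{g}_v(\chi)^{-1}$ gives $\alpha_v^{-1} \in \mathcal{O}_{F_v^c}G$ and completes the argument.
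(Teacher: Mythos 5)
Your overall strategy is sound and is in fact the shape of the argument behind the result the paper merely cites (\cite[Theorem 11.5 (b)]{T}, going back to McCulloh): reduce to each place $v$, dispose of $v\nmid |G|$ via the equality in (\ref{integral}) and the integrality of the exponents $\langle\psi,s\rangle_*$ for $\psi\in S_{\widehat G}$, and at $v\mid |G|$ build an explicit lift $\alpha_v$ of $\Theta^t_*(g_v)$ by $p$-adic $\exp$--$\log$. The convergence estimates are also right: $\exp(G)^2\mid\mathfrak m$ gives $\mathrm{val}_p(\log g_v(s))\ge 2\,\mathrm{val}_p(\exp(G))$, so each $\langle\chi,s\rangle_*\log g_v(s)$ has valuation at least $\mathrm{val}_p(\exp(G))\ge 1>1/(p-1)$ and $\exp$ converges; and the restriction of $\alpha_v$ to $S_{\widehat G}$ does recover $\Theta^t_*(g_v)$, which settles $\Omega_{F_v}$-equivariance of the class mod $G$.

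The gap is in the integrality step, which you correctly identify as the heart of the matter. The claim that ``$|G|\mid\mathfrak m$ ensures that the error terms are also divisible by $|G|$'' is false. The best uniform bound is $\mathrm{val}_p(\varepsilon_\chi)\ge\mathrm{val}_p(\mathfrak m)-\mathrm{val}_p(\exp(G))$, because forming $g_v(s)^{1/|s|}$ costs a factor $|s|$; and since ``divisible by both $|G|$ and $\exp(G)^2$'' only forces $\mathrm{val}_p(\mathfrak m)\ge\max\bigl(\mathrm{val}_p(|G|),2\,\mathrm{val}_p(\exp(G))\bigr)$, this bound can be strictly smaller than $\mathrm{val}_p(|G|)$. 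Concretely, take $G=(\mathbb Z/p\mathbb Z)^2$, $\mathfrak m=p^2\mathcal O_F$, and $g_v$ supported at a single $s$ of order $p$ with $g_v(s)=1+p^2u$, $u$ a unit: then $\widetilde g_v(\chi)=h^{\upsilon(\chi,s)}$ with $h=g_v(s)^{1/p}$, and $\mathrm{val}_p(\varepsilon_\chi)=1<2=\mathrm{val}_p(|G|)$ for every $\chi$ nontrivial on $s$. So termwise divisibility fails and your argument does not deliver $\alpha_{v,t}\in\mathcal O_{F_v^c}$. The sum $\sum_\chi\varepsilon_\chi\chi(t^{-1})$ \emph{is} divisible by $|G|$, but for a structural reason you have not invoked: $\widetilde g_v=\prod_{s\neq 1}\phi_s$ with $\phi_s(\chi)=h_s^{\upsilon(\chi,s)}$ depending on $\chi$ only through $\chi|_{\langle s\rangle}$. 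Reducing to a single $s$ (integrality is multiplicative), the sum over each fibre of $\widehat G\twoheadrightarrow\widehat{\langle s\rangle}$ contributes a factor $|G|/|s|$ by orthogonality on $G/\langle s\rangle$, and the remaining sum over $\widehat{\langle s\rangle}$ is the geometric series $w^{(1-|s|)/2}(w^{|s|}-1)/(w-1)$ with $w^{|s|}-1=g_v(s)-1\in\mathfrak m\mathcal O_{F_v^c}$, whose valuation exceeds $\mathrm{val}_p(|s|)$ precisely because $\exp(G)^2\mid\mathfrak m$. This second layer of cancellation must be supplied; without it the integrality of $\alpha_v$ (and hence the theorem) is unproved.
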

\begin{proof}See \cite[Theorem 11.5 (b)]{T}.
\end{proof}

\begin{thm}\label{approx2}Let $g\in J(\Lambda(FG))$ and let $T$ be a finite subset of $M_F$. Then, \mbox{there exists} $f\in\mathfrak{F}$ such that $f_v=1$ for all $v\in T$ and 
\[
g\equiv f\hspace{1cm}(\mbox{mod }\lambda(\Lambda(FG)^\times)U_\mathfrak{m}'(\Lambda(\mathcal{O}_FG))).
\]
Moreover, we may choose $f$ so that for each $s\in G(-1)$ with $s\neq 1$, there exists $\omega\in\Omega_F$ such that $f_{\omega\cdot s}\neq 1$.
\end{thm}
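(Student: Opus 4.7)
The plan is to construct $f$ in two stages. First, I would produce $f^{(0)} \in \mathfrak{F}$ with $f^{(0)}_v = 1$ for all $v \in T$ such that $g \equiv f^{(0)}$ modulo $\lambda(\Lambda(FG)^\times) U'_\mathfrak{m}(\Lambda(\mathcal{O}_FG))$. Second, I would modify $f^{(0)}$ by inserting class-neutral pairs of factors at fresh primes so as to realize every nontrivial $\Omega_F$-orbit of $G(-1)$ in the support of the final element $f$.

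For the first stage, the key observation is that the modified ray class group $\mbox{Cl}'_\mathfrak{m}(\Lambda(FG))$ is a finite abelian group which decomposes along $\Omega_F$-orbits of $G(-1)$ into a product of generalized ideal class groups of subfields of $F(\zeta_{\exp(G)})$. For each nontrivial orbit $[s]$ in $G(-1)$, Chebotarev density applied to $F(\zeta_{|s|})/F$ (and to the relevant ray class field) produces infinitely many primes $v \notin T$ with $|s| \mid q_{F_v} - 1$, and the classes $[f_{F_v, s}]$ of such primes surject onto the $[s]$-component of $\mbox{Cl}'_\mathfrak{m}(\Lambda(FG))$. Choosing finitely many such primes orbit by orbit (with disjoint supports, all outside $T$) produces $f^{(0)} \in \mathfrak{F}$ in the same class as $g$; the $T$-avoidance is achieved by weak approximation on $\Lambda(FG)^\times$, which absorbs the $T$-components of $g$ into $\lambda(\Lambda(FG)^\times) U'_\mathfrak{m}(\Lambda(\mathcal{O}_FG))$.

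For the second stage, let $\mathcal{O}$ denote the set of nontrivial $\Omega_F$-orbits in $G(-1)$ not already realized in the support of $f^{(0)}$. For each $[s] \in \mathcal{O}$, Chebotarev supplies infinitely many primes $w \notin T$ outside the current support of $f^{(0)}$ with $|s| \mid q_{F_w} - 1$; since $\mbox{Cl}'_\mathfrak{m}(\Lambda(FG))$ is finite, a pigeonhole argument yields two such primes $w_1, w_2$ with $[f_{F_{w_1}, s}] \cdot [f_{F_{w_2}, s}] = 1$ in the class group. Inserting both $f_{F_{w_1}, s}$ and $f_{F_{w_2}, s}$ into $f^{(0)}$ then covers the orbit $[s]$ without altering the class modulo $\lambda(\Lambda(FG)^\times) U'_\mathfrak{m}(\Lambda(\mathcal{O}_FG))$. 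Iterating over $\mathcal{O}$, choosing disjoint prime supports at each step, produces the desired $f$.

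The main obstacle is the precise surjectivity statement in the first stage: one must show that the classes $[f_{F_v, s}]$, as $v$ ranges over primes outside $T$ with $|s| \mid q_{F_v} - 1$, generate the $[s]$-component of $\mbox{Cl}'_\mathfrak{m}(\Lambda(FG))$. This reduces, via the orbit decomposition, to a standard Dirichlet/Chebotarev density statement on the ray class groups of the intermediate fields between $F$ and $F(\zeta_{|s|})$, combined with the bookkeeping needed so that the weak-approximation adjustment at the primes in $T$ remains compatible with the global elements $\lambda(\gamma)$ that are inserted elsewhere in the construction.
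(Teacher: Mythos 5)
The paper does not prove this statement itself; it simply cites \cite[Proposition 6.14]{McCulloh}, and your outline reconstructs essentially that argument: decompose $\Lambda(FG)=\mbox{Map}_{\Omega_F}(G(-1),F^c)$ along $\Omega_F$-orbits of $G(-1)$ into a product of fields $F(\zeta_{|s|})$, identify $\mbox{Cl}'_{\mathfrak{m}}(\Lambda(FG))$ with the corresponding product of (ray) class groups, observe that $f_{F_v,s}$ maps to the class of a degree-one prime of $F(\zeta_{|s|})$ above $v$, and invoke the generalized Dirichlet/Chebotarev theorem that every ray class contains infinitely many such primes outside any finite set. That is the right skeleton, and the disjoint-support bookkeeping you describe is what keeps the final product inside $\mathfrak{F}$ (each $v$ carries at most one $f_{F_v,s}$).

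There is, however, one concrete misstep in your second stage: pigeonhole among primes $w$ with $|s|\mid q_{F_w}-1$ produces $w_1,w_2$ with $[f_{F_{w_1},s}]=[f_{F_{w_2},s}]$, whence $[f_{F_{w_1},s}]\cdot[f_{F_{w_2},s}]$ is the \emph{square} of a class, not the identity; since $\mbox{Cl}'_{\mathfrak{m}}(\Lambda(FG))$ need not have exponent $2$, two primes do not suffice. The fix is immediate: either use the surjectivity you already established in stage one to choose $w_2$ with $[f_{F_{w_2},s}]=[f_{F_{w_1},s}]^{-1}$, or take $n=\mathrm{ord}([f_{F_{w_1},s}])$ distinct primes all lying in the same class and insert all $n$ factors. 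A cleaner packaging (closer to McCulloh) is to fold the covering requirement into stage one: for each nontrivial orbit $[s]$, write the $[s]$-component of the class of $g$ as a nonempty product of classes of degree-one primes avoiding $T$ and all previously used primes, which is always possible because each class is hit infinitely often. With that repair the proof is complete.
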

\begin{proof}See \cite[Proposition 6.14]{McCulloh}.
\end{proof}

\subsection{Proof of Theorems~\ref{thm:subgroupu} and~\ref{thm:wildu}}\label{s:5.5}

\begin{proof}[Proof of Theorem~\ref{thm:subgroupu}]Let $\upvarrho_u$ be the composition of the homomorphism 
\[
J(KG_{(s)})\longrightarrow J(\mathcal{H}(KG_{(s)}));\hspace{1cm}c\mapsto rag(c),
\]
where $rag$ is as in Definition~\ref{rag}, followed by the natural quotient map
\[
J(\mathcal{H}(KG_{(s)}))\longrightarrow\frac{J(\mathcal{H}(KG_{(s)}))}{\eta(\mathcal{H}(KG_{(1)}))U(\mathcal{H}(\mathcal{O}_KG))\Theta^t_*(J(\Lambda(KG)))}.
\]
Note that $\Theta^t_*(J(\Lambda(KG)))\subset J(\mathcal{H}(KG_{(s)}))$ by Proposition~\ref{ImTheta}. We will show that $\mathcal{A}_u^t(\mathcal{O}_KG)$ is a subgroup of $\mbox{UCl}(\mathcal{O}_KG)$ by showing that
\[
j_{(s)}^{-1}(\mathcal{A}_u^t(\mathcal{O}_KG))=\ker(\upvarrho_u),
\]
or equivalently, that for $c\in J(KG_{(s)})$, we have $j_{(s)}(c)\in\mathcal{A}^t_u(\mathcal{O}_KG)$ if and only if
\begin{equation}\label{tamecharu}
rag(c)\in\eta(\mathcal{H}(KG_{(1)}))U(\mathcal{H}(\mathcal{O}_KG))\Theta^t_*(J(\Lambda(KG))).
\end{equation}

To that end, let $c\in J(KG_{(s)})$ be given. If $j_{(s)}(c)=\mbox{ucl}(A_h)$ \mbox{for some tame} $h\in\mbox{Hom}(\Omega_K,G)$, with $K_h=KG\cdot b$ and $b$ self-dual say, \mbox{then $r_G(b)\in\mathcal{H}(KG_{(1)})$} by (\ref{global}) and Proposition~\ref{SDcriteria'} (b). Moreover, \mbox{by Theorem~\ref{char1}, there} exists $c'\in J(KG_{(s)})$ such that $j_{(s)}(c')=\mbox{ucl}(A_h)$ and
\[
rag(c')\in\eta(\mathcal{H}(KG_{(1)}))U(\mathcal{H}(\mathcal{O}_KG))\Theta^t_*(J(\Lambda(KG))).
\]
Since $j_{(s)}(c)=\mbox{ucl}(A_h)$ also, we have
\[
c\equiv c'\hspace{1cm}(\mbox{mod }\partial(KG_{(1)})U(\mathcal{O}_KG)).
\]
It is then clear that (\ref{tamecharu}) indeed holds.

Conversely, if (\ref{tamecharu}) holds, then
\begin{equation}\label{tamecharu'}
rag(c)=\eta(r_G(b))^{-1}u\Theta^t_*(g)
\end{equation}
for some  $r_G(b)\in\mathcal{H}(KG_{(1)})$, $u\in U(\mathcal{H}(\mathcal{O}_KG))$, and $g\in J(\Lambda(KG))$. Let $\mathfrak{m}$ be an ideal in $\mathcal{O}_K$. Then, by Theorem~\ref{approx2}, there exists $f\in\mathfrak{F}$ such that
\begin{equation}\label{g=f}
g\equiv f\hspace{1cm}(\mbox{mod }\lambda(\Lambda(KG)^\times)U_\mathfrak{m}'(\Lambda(\mathcal{O}_KG))).
\end{equation}
Choosing $\mathfrak{m}$ to be divisible by $|G|$ and $\exp(G)^2$, by Proposition~\ref{ImTheta} and Theorem~\ref{approx1}, the above implies that
\[
\Theta^t_*(g)\equiv \Theta^t_*(f)\hspace{1cm}(\mbox{mod }\eta(\mathcal{H}(KG_{(1)}))U(\mathcal{H}(\mathcal{O}_KG))).
\]
Hence, changing $b$ and $u$ in (\ref{tamecharu'}) if necessary, we may assume that $g=f$. Since $b$ is self-dual, if $h:=h_b$ is the homomorphism associated to $r_G(b)$, then $h$ is tame and $j_{(s)}(c)=\mbox{ucl}(A_h)$ by Theorem~\ref{char1}. It remains to show that $h$ may be chosen such that (1) and (2) are satisfied.

Let $T$ be a finite set of primes in $\mathcal{O}_K$. First of all, by Theorem~\ref{approx2}, we may choose the $f\in\mathfrak{F}$ in (\ref{g=f}) such that $f_v=1$ for all $v\in T$. It then follows from Theorem~\ref{char1} that $h_v$ is unramified for all $v\in T$, so (2) holds. Moreover, we may also choose the $f\in\mathfrak{F}$ in (\ref{g=f}) such that for each $s\in G(-1)$ with $s\neq 1$, there exists $\omega\in\Omega_K$ such that $f_{\omega\cdot s}\neq 1$. In particular, we have $f_v=f_{K_v,\omega\cdot s}$ for some $v\in M_K$. But observe that $\langle s\rangle=\langle\omega\cdot s\rangle$ and that $\omega\cdot s\in h_v(\Omega_{K_v})$ by Theorem~\ref{char1}, so $s\in h(\Omega_K)$. This shows that $h$ is surjective and hence $K_h$ is a field, so (1) holds as well. This completes the proof of the theorem.
\end{proof}

\begin{proof}[Proof of Theorem~\ref{thm:wildu}]Let $h\in\mbox{Hom}(\Omega_K,G)$ be given as in the statement of the theorem. Let $b\in K_h$ be as in (\ref{b}), where we choose $b$ to be self-dual. Moreover, for each $v\in M_K$, let $a_v\in A_{h_v}$ and $c_v\in (K_vG)^\times$ be given as in (\ref{av}) and (\ref{cv}), respectively. Then, we have $c\in J(KG_{(s)})$ and $\mbox{ucl}(A_h)=j_{(s)}(c)$, as explained in Subsection~\ref{s:5.0}. Moreover, equation (\ref{resolvendeq}) implies that
\[
rag(c_v)=r_G(b)^{-1}r_G(a_v)
\]
for each $v\in M_K$. Notice also that $r_G(b)\in\mathcal{H}(KG_{(1)})$ by (\ref{global}) and Proposition~\ref{SDcriteria'} (b). From (\ref{tamecharu}), we then see that $\mbox{ucl}(A_h)\in\mathcal{A}^t_u(\mathcal{O}_KG)$ will hold provided that for all $v\in M_K$, we have
\begin{equation}\label{wildu1}
r_G(a_v)\in\mathcal{H}(\mathcal{O}_{K_v}G)\Theta^t_*(\Lambda(K_vG)^\times)
\end{equation}
If $v\notin V$, then (\ref{wildu1}) follows from \cite[Theorem 10.3]{T}. If $v\in V$, then hypotheses (1) and (2) allow us to apply \cite[Theorem 13.2]{T} and conclude that (\ref{wildu1}) holds. This proves the theorem.
\end{proof}

\section{Acknowledgments}

I would like to thank my advisor Professor Adebisi Agboola for bringing this problem to my attention.


\end{document}